\theoremstyle{definition}
\newtheorem{definition}{Definition}
\theoremstyle{plain}
\newtheorem{theorem}{Theorem}
\newtheorem{corollary}{Corollary}
\newtheorem{lem}{Lemma}
\newtheorem{prop}{Proposition}
\theoremstyle{remark}
\newtheorem{remark}{Remark}
\DeclareMathOperator{\CC}{\mathbb C}
\DeclareMathOperator{\ZZ}{\mathbb Z}
\DeclareMathOperator{\NN}{\mathbb N}
\DeclareMathOperator{\dis}{dis}
\DeclareMathOperator{\interior}{int}
\title{A characterization of polynomial density on curves via matrix algebra.}
\address{ Departamento de Matem\'atica Aplicada, Facultad de Inform\'atica de Madrid\\
        Universidad Polit\'ec\-ni\-ca, Campus de Montegancedo\\
      Boadilla del Monte, 28660 Madrid Spain, Phone: +34913367429 \\
      }
\email{cescribano@fi.upm.es}
\email{rngonzalo@fi.upm.es}
\email{emilio@fi.upm.es}
\begin{document}

\author{Escribano, C.}
\author{Gonzalo, R.}
\author{Torrano, E.}

\begin{abstract} In this work, our aim is to obtain conditions to assure polynomial approximation in Hilbert spaces  $L^{2}(\mu)$, with $\mu$ a compactly supported measure in the complex plane,  in terms of properties of the associated moment matrix to the measure $\mu$. In order to do it, in the more general context of Hermitian positive semidefinite matrices we introduce three indexes $\gamma(\mathbf{M})$, $\lambda(\mathbf{M})$ and $\alpha(\mathbf{M})$  associated with different optimization problems concerning theses matrices.  Our main result is a characte\-rization of density of polynomials in the case of measures supported on Jordan curves with non empty interior using the index $\gamma$ and other specific index related to it. Moreover, we provide a new point of view of bounded point evaluations associated to a measure in terms of the index $\gamma$ that will allow us to give an alternative proof of Thomson's theorem  in \cite{Brennan} by using these matrix indexes. We point out that our techniques are based in matrix algebra  tools in the frame of Hermitian Positive Definite matrices and in the computation of certain indexes related to some  optimization problems for infinite matrices.
\end{abstract}

\maketitle
\begin{quotation} {\sc {\footnotesize Keywords}}. {\small Hermitian moment problem,  orthogonal polynomials,
smallest eigenvalue, measures, polynomial density.}
\end{quotation}

\section{Introduction}

\noindent Thorough the paper we consider positive  Borel measures $\mu$ which are finite and  compactly supported in the complex plane. We always consider nontrivial measures, that is, measures with an infinite amount of points in their support. The problem of completeness of polynomials in the Hilbert space $L^{2}(\mu)$ is the following: for a certain measure $\mu$, are polynomials dense in the space $L^{2}(\mu)$? In other words, denote by $P^{2}(\mu)$ the closure of the polynomials  in the space  $L^{2}(\mu)$, the question is under what conditions  the equality $L^{2}(\mu)=P^{2}(\mu)$ is true. In the particular case of $\mu$ being the two-dimensional Lebesgue  measure on  an arbitrary domain $G$  and $L^{2}(G)$ the associated Hilbert space,  the classical results of approximation by polynomials can be seen in e.g.  \cite{Gaier}, where it is explored the question of which assumptions on $G$ will be assumed in order to have polynomials density in $L^{2}(G)$. The related questions about the existence of approximation rational, entire, or meromorphic were solved by the great theorem of Mergelyan in 1951 which completes a long chain of theorems about approximation by polynomials.

\medskip
\noindent The problem of density of polynomials is also an interest topic in the Theory of orthogonal polynomials associated with a measure. Indeed, in the particular case of orthogonal polynomials in the unit circle the well known Szeg\"{o} theory ( see e.g. \cite{szego},\cite{Simon1}) deals with  the problem of polynomial approximation  using proper tools of orthogonal polynomials.

\medskip
\noindent On the other hand, in \cite{EGT1} a necessary condition was provided to assure polyn\-omial approximation using the behaviour of the smallest eigenvalues  of the finite sections of the moment matrix associated to a measure.Along this work we  follow this matrix approach in order to obtain the main results in this paper.

\medskip

\noindent Thorough this paper we consider infinite positive definite hermitian matrices $\mathbf{M}=(c_{i,j})_{i,j=0}^{\infty}$.
As in \cite{Duran}, \cite{EGT1} an
Hermitian positive definite  matrix (in short, an HPD matrix) defines an inner product $\langle \;,
\; \rangle$ in the space $\mathbb{P}[z]$ of all polynomials with complex coefficientes in the following way:
if  $p(z)=\sum_{k=0}^{n}v_kz^k$ y
$q(z)=\sum_{k=0}^{m}w_kz^k$ then,
$$
\langle p(z),q(z) \rangle=v\mathbf{M}w^{*},
$$

\noindent being $v=(v_0,\dots,v_n,0,0, \dots), w=(w_0,\dots,w_m,0,0, \dots) \in c_{00}$,  where $c_{00}$ is the space
of all complex sequences with only finitely many non-zero entries. The associated norm is $\Vert p(z) \Vert^2 = \langle p(z),p(z)\rangle $ for every $p(z)\in \mathbb{P}[z]$.

\noindent An interesting   class of HPD matrices are those which are  moment matrices
 with respect to a measure $\mu$, i.e., HPD matrices  $\mathbf{M}=(c_{i,j})_{i,j=0}^{\infty}$ such that there exists a representating measure $\mu$ with infinite support on $\CC$ and  finite moments for all $i,j\geq 0$,
$$
c_{i,j}= \int z^{i} \overline{z}^jd\mu.
$$

\noindent Our aim here is to obtain conditions to assure polynomial approximation in Hilbert spaces  $L^{2}(\mu)$, with $\mu$ a compactly supported measure in the complex plane,  in terms of properties of the associated matrix  $\mathbf{M}$. In order to do it, in the more general context of Hermitian positive {\it semidefinite} matrices we introduce three matrix indexes $\gamma(\mathbf{M})$, $\lambda(\mathbf{M})$ and $\alpha(\mathbf{M})$, each one related with different  optimization matrix problems. Among these indexes we highlight the index $\gamma$ that as we have realized will be essential to characterize the polynomial density in our context. The other index $\lambda$ is related to the asymptotic behaviour of the smallest eigenvalues in our previous works (see \cite{EGT1}). These indexes will be introduced in
the first section some properties of them will be given. We also provide an application to the  index $\lambda$ to some problems of perturbations of measures in the same direction as in \cite{Marcellan}.

\medskip

\noindent In the following section we consider the case when the Hermitian semidefinite positive matrices are moment matrices associated with a measure $\mu$ with compact support in the complex plane. Our main result is a characterization of completeness of polynomials in the associated space $L^{2}(\mu)$, in the case of Jordan curves with $0$ in its interior, in terms of the index $\gamma$ of the moment matrix associated with the measure $\mu$.

\medskip

\noindent In the last section we give our main result which is characterization of density of polynomials on Jordan curves with non empty interior in terms of an specific index related to the index $\gamma$. Moreover, we provide a matrix algebra point of view of the notion of bounded point evaluation of a measure. This will lead us to obtain a new proof of Thomson's theorem    in \cite{Brennan}, in the particular context of Jordan curves with non empty interior,  using our techniques and our results.

\medskip

\noindent Finally, we point out that our approach is based in matrix algebra tools in the frame of general HPD and in the computation of certain indexes related to some optimization problems for infinite matrices. This point of view would allow solving certain matrix optimization problems in terms of the Theory of orthogonal polynomials and on the other hand would let obtaining results of interest concerning orthogonal polynomials using the matrix optimization tools.

\noindent
\section{New indices of an HPD matrix and connections with the polynomial approximation }
\noindent In this section we introduce some indices associated with general Hermitian semi-positive definite matrices. Let $\mathbf{M}=(c_{i,j})_{i,j=0}^{\infty}$ be an infinite Hermitian matrix, i.e. $c_{i,j}=\overline{c_{j,i}}$. We say that an infinite  Hermitian matrix  $\mathbf{M}$ is {\it positive definite} if (in short an HPD matrix) if $\vert \mathbf{M}_n \vert >0$ for all $n\geq 0$, where $\mathbf{M}_n$ is the truncated matrix of size $(n+1) \times (n+1)$ of $\mathbf{M}$. In an analogous way, if  $\vert \mathbf{M}_n \vert \geq 0$ for all $n\geq 0$ we say that $\mathbf{M}$ is an Hermitian  semi-positive definite matrix (in short HSPD). In the sequel we use the same notation  as in \cite{EGT1}, we denote by $(1,v) \equiv (1,v_1,\dots,v_n,0,0,)$ for every $v=(v_1,v_2,\dots)\in c_{00}$ and by $(v,1,0,\dots) \equiv (v_0,\dots,v_{n-1},1,0,\dots)$
\begin{definition} Let $\mathbf{M}$ be an infinite Hermitian semi-definite positive matrix. We define
$$
\gamma(\mathbf{M})=
\inf  \{ (1,v)\mathbf{M}\left( \!\!\!
\begin{array}{c}
                   1 \\
                   v^{*} \\
                 \end{array} \!\!\!
               \right   ),
               v\in c_{00} \}.
$$
\noindent This index always exists and $\gamma(\mathbf{M})\geq 0$.
\end{definition}
\begin{definition} Let $\mathbf{M}$ be an
infinite Hermitian semi-definite
positive matrix. We define
$$
\lambda(\mathbf{M})= \inf \{v\mathbf{M}v^{*}; vv^{*}=1, v\in c_{00}\}.
$$
\noindent This index always exists and $\lambda(\mathbf{M})\geq 0$.
\end{definition}
\begin{remark}  Note that there is an important link between eigenvalue problems and optimization is the Rayleigh quotients. Indeed, for Hermitian matrices $\mathbf{M}_n$ it is well known that if we define $Q_n(v)=\dfrac{v\mathbf{M}_nv^{*}}{vv^{*}}$ for $0\neq v\in \CC^n$ it is well known that $\min \{ Q_n(v) \, v\in \CC^n\}$ or $\max \{Q_n(v) : v\in \CC^n\}$ gives the extreme eigenvalues of $\mathbf{M}_n$. We denote by $\lambda_n$ the smallest eigenvalue of $\mathbf{M}_n$ as in \cite{EGT1}; that is if $\Vert \cdot \Vert_2$ is the euclidean norm in $\CC^{n+1}$
$$
\lambda_{n}=\inf \{v\mathbf{M}_nv^{*}; v\in \CC^{n+1}, \Vert v \Vert_2=1\}
$$
\noindent Moreover, the sequence $\{\lambda_n\}_{n=0}^{\infty}$ is an non non decreasing sequence and
$$
\lambda(\mathbf{M}) = \lim_{n\to \infty} \lambda_n.
$$
\end{remark}
\begin{definition}  Let $\mathbf{M}$ be an infinite Hermitian semi-definite positive matrix. We define $\alpha(\mathbf{M})$
$$
\alpha(\mathbf{M})= \inf  \{ (v,1)\mathbf{M}\left( \!\!\!
                 \begin{array}{c}
                    v^{*}\\
                  1\\
                 \end{array} \!\!\!
               \right) \, ,
               v\in c_{00} \}.
$$
\noindent This index always exists and $\alpha(\mathbf{M})\geq 0$.
\end{definition}
\noindent Next we relate these three indexes.
\begin{prop} Let $\mathbf{M}$ be an infinite Hermitian semi-definite positive matrix. Then,
\begin{enumerate}
\item[i)] $\lambda(\mathbf{M}) \leq \gamma(\mathbf{M})$
\item[ii)] $\lambda(\mathbf{M}) \leq \alpha(\mathbf{M})$
\end{enumerate}
\end{prop}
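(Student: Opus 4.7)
The plan is to reduce both inequalities to the Rayleigh quotient description of $\lambda(\mathbf{M})$ recorded in the preceding Remark, namely that for vectors in $c_{00}$
$$\lambda(\mathbf{M}) = \inf\Bigl\{\tfrac{w\mathbf{M}w^{*}}{ww^{*}} : 0 \ne w \in c_{00}\Bigr\},$$
which follows from $\lambda(\mathbf{M}) = \lim_{n\to\infty}\lambda_n$ together with the Rayleigh characterisation of $\lambda_n$ on each finite section. Once this is in hand, both parts become an observation about the normalisation of admissible test vectors.

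For part (i), I would take an arbitrary $v \in c_{00}$ and consider the test vector $w=(1,v)\in c_{00}$. Then $ww^{*}=1+vv^{*}\ge 1$ and, since $\mathbf{M}$ is HSPD, $w\mathbf{M}w^{*}\ge 0$. Therefore
$$\lambda(\mathbf{M}) \;\le\; \frac{w\mathbf{M}w^{*}}{ww^{*}} \;=\; \frac{(1,v)\mathbf{M}\!\left(\begin{array}{c}1\\ v^{*}\end{array}\right)}{1+vv^{*}} \;\le\; (1,v)\mathbf{M}\!\left(\begin{array}{c}1\\ v^{*}\end{array}\right).$$
Taking the infimum over $v\in c_{00}$ on the right yields $\lambda(\mathbf{M})\le \gamma(\mathbf{M})$.

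For part (ii) the argument is identical after replacing the test vector by $w=(v,1)$: again $ww^{*}=vv^{*}+1\ge 1$ and $w\mathbf{M}w^{*}\ge 0$, so
$$\lambda(\mathbf{M}) \;\le\; \frac{w\mathbf{M}w^{*}}{ww^{*}} \;\le\; (v,1)\mathbf{M}\!\left(\begin{array}{c}v^{*}\\ 1\end{array}\right),$$
and passing to the infimum over $v$ gives $\lambda(\mathbf{M})\le \alpha(\mathbf{M})$.

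There is no real obstacle in this proof: the only point needing care is that $\lambda$ is defined as an infimum over \emph{unit} vectors while $\gamma$ and $\alpha$ are defined over affine sets of vectors (those with a prescribed coordinate equal to $1$). That mismatch is resolved precisely because the prescribed coordinate $1$ forces $\|w\|_2\ge 1$, so dividing by $ww^{*}$ can only decrease a nonnegative quadratic form. This is also the reason the argument uses semidefiniteness of $\mathbf{M}$ in an essential way.
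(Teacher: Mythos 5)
Your proof is correct and follows essentially the same route as the paper: the paper also reduces to the scaling inequality $v\mathbf{M}v^{*}\geq \lambda(\mathbf{M})\,vv^{*}$ (the Rayleigh-quotient fact you invoke) and then applies it to the test vectors $(1,v)$ and $(v,1)$, using $\lambda(\mathbf{M})\geq 0$ together with $(1,v)(1,v)^{*}\geq 1$ exactly as you use $ww^{*}\geq 1$. No substantive difference.
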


\begin{proof} We first show $i)$. Let $v\neq 0$ and consider the normalized vector $\dfrac{v}{(vv^{*})^{1/2}}$. By the definition of $\lambda(\mathbf{M})$ we have that
$$
\dfrac{v}{(vv^{*})^{1/2}}\mathbf{M}\dfrac{v^{*}}{(vv^{*})^{1/2}} \geq \lambda(\mathbf{M}).
$$
\noindent By taking in mind that for any $t>0$ it holds
  $(tv)\mathbf{M}(tv)^{*}=t^{2}v\mathbf{M}v^{*}$, then it
easily follows that
$$
v\mathbf{M}v^{*} \geq \lambda(\mathbf{M}) vv^{*}.
$$
\noindent Consequently,
$$
(1,v)\mathbf{M}\left(  \!\!
                 \begin{array}{c}
                   1 \\
                   v^{*} \\
                 \end{array}  \!\!
               \right) \geq
               \lambda(\mathbf{M})(1,v)(1,v)^{*} \geq \lambda(\mathbf{M}).
               $$
              \noindent By taking
              the infimum we obtain
              $\lambda(\mathbf{M}) \leq \gamma(\mathbf{M}).$
\noindent In the same way, we have  that for every $v\in c_{00}$,
   $$ (v,1,0,\dots)\mathbf{M}\left( \!\!
                 \begin{array}{c}
                    v^{*}\\
                  1\\
                  0\\
                  \vdots
                 \end{array} \!\!
               \right) \geq \lambda(\mathbf{M}) (v,1,0,\dots)(v,1,0,\dots)^{*}
                                 \geq \lambda(\mathbf{M})$$
\noindent and consequently $\lambda(\mathbf{M}) \leq \alpha(\mathbf{M})$.
\end{proof}
\begin{remark} The equality is not true in general, even for Toeplitz matrices. For instance, consider the matrix
 \[ \mathbf{T} =\left ( \begin {array}{cccccc} 1&-1/2&1/4&-1/8&1/16& \ldots
\\ \noalign{\medskip}-1/2&1&-1/2&1/4&-1/8& \ldots \\ \noalign{\medskip}1/4&-1/2
&1&-1/2&1/4& \ldots \\ \noalign{\medskip}-1/8&1/4&-1/2&1&-1/2
& \ldots \\ \noalign{\medskip}1/16&-1/8&1/4&-1/2&1& \ldots\\
\vdots & \vdots & \vdots & \vdots & \vdots & \ddots
\end {array} \right ).
\]
By induction we obtain that $|\mathbf{T}_{n}|/|\mathbf{T}_{n-1}|=3^{n}/4^{n}$, consequently
$\kappa^2 = \lim_{n}|\mathbf{T}_{n-1}|/|\mathbf{T}_{n}|= 4/3$, where $\kappa$ is the  leading coefficient of orthonormal polynomial sequence. We get the associated weight $w(t)$ of the Toeplitz matrix $\mathbf{T}$,
 \begin{multline*}
 \qquad \qquad w(t)= \sum_{n=-\infty}^{\infty} \frac{(-1)^n}{2^{|n|}} z^{n} = \frac{3z}{(z+2)(2z+1)}=\\ \frac{3 e^{it}}{(2+e^{it})(2e^{it}+1)}= \frac{3}{5+4\cos(t)}
 \end{multline*}
It follows by \cite{grenander}
 \[\lambda(\mathbf{T})= \min_{t \in [0,2\pi]} w(t)=  \frac{1}{3}.\]
Therefore by \cite{EGT1} it can be obtained
\[ \gamma(\mathbf{T})=\dfrac{1}{\kappa^2}=\frac{3}{4}.\]
\end{remark}
\begin{remark}
Using the results in  \cite{EGT1} it can be easily obtained that
in the case of Toeplitz matrices $\mathbf{T}$ the indexes
$\gamma(\mathbf{T}),\alpha(\mathbf{T})$ coincide, that is:
\end{remark}
\begin{prop} \cite{EGT1} Let $\mathbf{T}$ be an infinite Toeplitz
SHPD matrix. Then,
$$
\gamma(\mathbf{T})= \alpha(\mathbf{T}).
$$
\end{prop}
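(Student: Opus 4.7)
The plan is to exploit the reversal symmetry enjoyed by every Hermitian Toeplitz inner product: the operation $p(z)\mapsto z^{n}\,\overline{p(1/\overline{z})}$ is a norm-preserving involution on polynomials of degree at most $n$, and it exchanges the condition ``constant term equals $1$'' with ``monic of degree $n$''. This single observation should be enough to identify the two infima.

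First I would recast both indexes as polynomial optimization problems. Writing $\mathbf{T}=(t_{i-j})_{i,j\ge 0}$ with $t_{-k}=\overline{t_k}$, a vector $(1,v)\in c_{00}$ encodes a polynomial $p$ with $p(0)=1$, while $(v,1,0,\dots)$ encodes a monic polynomial $q$ whose degree equals the length of $v$. Hence
\[
\gamma(\mathbf{T})=\inf\{\|p\|^{2}:p\in\mathbb{P}[z],\ p(0)=1\},\qquad \alpha(\mathbf{T})=\inf\{\|q\|^{2}:q\in\mathbb{P}[z],\ q\ \text{monic}\},
\]
where $\|\cdot\|$ is the norm induced by $\mathbf{T}$.

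Next, for each $n\ge 0$ I would introduce the reversal $R_{n}\colon\mathbb{P}_{\le n}[z]\to\mathbb{P}_{\le n}[z]$ defined, on coefficient vectors, by $(a_{0},a_{1},\dots,a_{n})\mapsto(\overline{a_{n}},\overline{a_{n-1}},\dots,\overline{a_{0}})$. Since the constant term of $p$ becomes the coefficient of $z^{n}$ in $R_{n}p$, the map $R_{n}$ restricts to a bijection between the set of polynomials $p$ of degree at most $n$ with $p(0)=1$ and the set of monic polynomials of degree exactly $n$. The crucial point is that $R_{n}$ preserves the Toeplitz norm: after the change of variables $i\mapsto n-i$, $j\mapsto n-j$,
\[
\|R_{n}p\|^{2}=\sum_{i,j=0}^{n}\overline{a_{n-i}}\,t_{i-j}\,a_{n-j}=\sum_{k,\ell=0}^{n}\overline{a_{k}}\,t_{\ell-k}\,a_{\ell},
\]
and invoking $t_{\ell-k}=\overline{t_{k-\ell}}$ together with the fact that $\|p\|^{2}$ is real yields $\|R_{n}p\|^{2}=\overline{\|p\|^{2}}=\|p\|^{2}$.

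Combining the bijection with the isometry, I would conclude that for each $n$ the infimum of $\|p\|^{2}$ over $p$ with $p(0)=1$ and $\deg p\le n$ equals the infimum of $\|q\|^{2}$ over monic $q$ of degree $n$; taking the infimum over $n$ then delivers $\gamma(\mathbf{T})=\alpha(\mathbf{T})$. The main technical obstacle is the norm-preservation of $R_{n}$, which is really a bookkeeping exercise that uses both defining symmetries of a Hermitian Toeplitz matrix (Toeplitz structure under index shift, Hermitian symmetry under complex conjugation). As an alternative route, one could instead invoke from \cite{EGT1} the identity $\gamma(\mathbf{T})=1/\kappa^{2}$ and establish $\alpha(\mathbf{T})=1/\kappa^{2}$ by observing that multiplication by $z$ is an isometry for the Toeplitz form, forcing the sequence $\|\Phi_{n}\|^{2}$ of squared norms of the monic orthogonal polynomials to decrease to $1/\kappa^{2}$.
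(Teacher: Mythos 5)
Your argument is correct, and it is worth noting that the paper itself gives no proof of this proposition: it simply cites \cite{EGT1}, remarking that the coincidence of $\gamma(\mathbf{T})$ and $\alpha(\mathbf{T})$ ``can be easily obtained'' from the results there (which go through the Szeg\H{o}-type identity $\gamma(\mathbf{T})=1/\kappa^{2}$ and the extremal properties of the monic orthogonal polynomials). Your reversal argument is a genuinely self-contained alternative: the involution $(a_{0},\dots,a_{n})\mapsto(\overline{a_{n}},\dots,\overline{a_{0}})$ exchanges $\{p:\deg p\le n,\ p(0)=1\}$ with the monic polynomials of degree exactly $n$, and your index-shift computation combined with $t_{\ell-k}=\overline{t_{k-\ell}}$ and the realness of $\|p\|^{2}$ does show it is an isometry for the form induced by $\mathbf{T}$; since $\gamma(\mathbf{T})=\inf_{n}\inf\{\|p\|^{2}:\deg p\le n,\ p(0)=1\}$ and $\alpha(\mathbf{T})=\inf_{n}\inf\{\|q\|^{2}:q\ \text{monic},\ \deg q=n\}$ by definition (the degree-$0$ case is harmless because the degree-$\le n$ infima are nonincreasing), the equality follows. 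What this buys over the cited route is that it uses only the two symmetries of a Hermitian Toeplitz form and is valid verbatim for merely semidefinite $\mathbf{T}$, exactly as the proposition is stated, whereas your alternative sketch via $\|\Phi_{n}\|^{2}\downarrow 1/\kappa^{2}$ (shift invariance plus extremality of the monic orthogonal polynomials) presupposes the positive definiteness needed to define $\Phi_{n}$ and $\kappa_{n}$, and moreover imports $\gamma(\mathbf{T})=1/\kappa^{2}$ from \cite{EGT1} rather than proving the identity from scratch; so the reversal argument is the one you should keep as the primary proof.
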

\bigskip

\noindent In the set
   of Hermitian semi-definite positive infinite matrices we may define an order in the following way: we say that $\mathbf{M}_1 \leq \mathbf{M}_2$ if $v\mathbf{M}_1v^{*} \leq v\mathbf{M}_2v^{*}$ for every $v\in c_{00}$. We have the following results:
\begin{lem} \label{lema1}
Let $\mathbf{M}_1,\mathbf{M}_2$ be infinite Hermitian semi-definite positive matrices
with $\mathbf{M}_1 \leq \mathbf{M}_2$ then:
\begin{enumerate}
\item $\lambda(\mathbf{M}_1) \leq \lambda(\mathbf{M}_2)$.
\item $\gamma(\mathbf{M}_1) \leq \gamma(\mathbf{M}_2)$.
\item $\alpha(\mathbf{M}_1) \leq \alpha(\mathbf{M}_2)$.
\end{enumerate}
\end{lem}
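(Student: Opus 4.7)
The plan is essentially immediate once one unfolds the three definitions and compares them against the hypothesis. Each of the indices $\lambda(\mathbf{M})$, $\gamma(\mathbf{M})$, $\alpha(\mathbf{M})$ is defined as an infimum of the quadratic form $w\mathbf{M}w^{*}$ as $w$ ranges over a certain admissible subset of $c_{00}$: for $\lambda$ the set is $\{w \in c_{00} : ww^{*} = 1\}$; for $\gamma$ it is $\{w = (1,v) : v \in c_{00}\}$; for $\alpha$ it is $\{w = (v,1,0,\dots) : v \in c_{00}\}$. Crucially, in all three cases every admissible $w$ already lies in $c_{00}$, and the admissible sets depend on nothing more than the shape of $w$, not on the matrix itself.

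The hypothesis $\mathbf{M}_1 \leq \mathbf{M}_2$ reads $w\mathbf{M}_1 w^{*} \leq w\mathbf{M}_2 w^{*}$ for every $w \in c_{00}$. So for each of the three parts I would apply this inequality to every $w$ in the corresponding admissible set, and then take the infimum over that common set on both sides. Monotonicity of $\inf$ with respect to pointwise inequality of functions on a fixed set immediately yields
\[
\lambda(\mathbf{M}_1) \leq \lambda(\mathbf{M}_2), \qquad \gamma(\mathbf{M}_1) \leq \gamma(\mathbf{M}_2), \qquad \alpha(\mathbf{M}_1) \leq \alpha(\mathbf{M}_2).
\]

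There is no real obstacle. The only point worth checking is the trivial one that the augmented vectors $(1,v)$ and $(v,1,0,\dots)$ used in defining $\gamma$ and $\alpha$ are still elements of $c_{00}$, so the hypothesis $\mathbf{M}_1 \leq \mathbf{M}_2$ applies to them. The lemma is therefore just a monotonicity statement for infima, and it is robust enough that the same argument would work for any index defined by minimising $w\mathbf{M}w^{*}$ over an admissible class of sequences that does not itself depend on $\mathbf{M}$.
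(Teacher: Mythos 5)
Your argument is correct and is exactly the intended one: since each index is an infimum of $w\mathbf{M}w^{*}$ over an admissible set of vectors in $c_{00}$ that does not depend on $\mathbf{M}$, the pointwise inequality $w\mathbf{M}_1w^{*}\leq w\mathbf{M}_2w^{*}$ passes directly to the infima. The paper itself states the lemma without proof, evidently regarding this monotonicity argument as immediate, so there is nothing to add.
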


\bigskip

\noindent We give some applications of the above result to some perturbation results in the same lines as \cite{Marcellan}. Let $\sigma$ be a non trivial positive measure with support in $\mathbb{T}$, in \cite{Marcellan} it is obtained that if the measure $\sigma$ verifies Szeg\"{o} condition and $\widetilde{\sigma}$ is the perturbed measure of $\sigma$ by  the  normalized Lebesgue measure in the unit circle, that is  $d\widetilde{\sigma}=d\sigma + r\dfrac{d\theta}{2\pi}$ for  $r>0$, then $\widetilde{\sigma}$ also verifies Szeg\"{o} condition. Using our techniques we generalize this result pointing out that there is no need  to require that $\sigma$ verifies Szeg\"{o} condition since the conclusion is true always. Indeed we have:

\begin{corollary} \label{corolario1}
Let $\sigma_1,\sigma_2$   positive  positive measures with support on $\mathbb{T}$. Assume that  one of them verifies Szeg\"{o} condition,  then the measure  $\sigma:=\sigma_1+\sigma_2$  verifies Szeg\"{o} condition. In particular,
if $d\widetilde{\sigma}=d\sigma + r\dfrac{d\theta}{2\pi}$ with $r>0$ for some  positive  measure with support in $\mathbb{T}$ then  $\widetilde{\sigma}$ verifies Szeg\"{o} condition.
\end{corollary}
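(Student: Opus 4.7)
The plan is to reduce both statements of the corollary to the monotonicity given by Lemma~\ref{lema1}, exploiting the fact that the moment map $\sigma \mapsto \mathbf{T}_{\sigma}$ sending a measure on $\mathbb{T}$ to its Toeplitz moment matrix is additive, together with the well known identification between the Szegő condition and the positivity of $\gamma$ for Toeplitz matrices.

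First I would observe that if $\mathbf{T}_{\sigma_1}$ and $\mathbf{T}_{\sigma_2}$ are the Toeplitz moment matrices of $\sigma_1$ and $\sigma_2$ respectively, then the Toeplitz moment matrix of the sum is literally the matrix sum: $\mathbf{T}_{\sigma} = \mathbf{T}_{\sigma_1} + \mathbf{T}_{\sigma_2}$, because every moment $\int e^{i(j-k)t}\,d\sigma$ splits linearly. Since $\mathbf{T}_{\sigma_2}$ is Hermitian semi-definite positive we have $v\mathbf{T}_{\sigma_2}v^{*}\geq 0$ for all $v\in c_{00}$, and therefore $\mathbf{T}_{\sigma_1} \leq \mathbf{T}_{\sigma}$ in the order introduced just before Lemma~\ref{lema1}. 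By part (2) of that lemma,
\[
\gamma(\mathbf{T}_{\sigma_1}) \leq \gamma(\mathbf{T}_{\sigma}).
\]

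Next I would translate the Szegő condition into a statement about $\gamma$. Writing $d\sigma_1 = w_1(t)\,\frac{dt}{2\pi}+d(\sigma_1)_{s}$, the classical Szegő theorem combined with the remark above (where $\gamma(\mathbf{T})=1/\kappa^{2}=\lim_{n}|\mathbf{T}_{n}|/|\mathbf{T}_{n-1}|$) gives
\[
\gamma(\mathbf{T}_{\sigma_1}) \;=\; \exp\!\Bigl(\tfrac{1}{2\pi}\!\!\int_{0}^{2\pi}\!\!\log w_1(t)\,dt\Bigr),
\]
so $\sigma_1$ verifies the Szegő condition if and only if $\gamma(\mathbf{T}_{\sigma_1})>0$. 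The same identity applied to $\sigma$ shows that the conclusion we seek is $\gamma(\mathbf{T}_{\sigma})>0$. But this is immediate from the displayed inequality above once we know $\gamma(\mathbf{T}_{\sigma_1})>0$, which finishes the first assertion. (No symmetry argument is needed; if instead $\sigma_2$ is the measure satisfying Szegő's condition, interchange the roles.)

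For the particular case it suffices to apply the first part with $\sigma_1 := r\frac{d\theta}{2\pi}$ and $\sigma_2 := \sigma$, since the normalized Lebesgue measure (scaled by $r>0$) has constant weight $w_1\equiv r$, and therefore trivially satisfies Szegő's condition. I do not foresee a major obstacle: the only step that is not a one-line consequence of the machinery already introduced is the identification of $\gamma(\mathbf{T}_{\sigma_1})$ with the Szegő exponential, but this is exactly the content of the remark preceding the corollary together with the classical Szegő formula, and so it may be quoted rather than proved.
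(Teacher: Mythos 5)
Your argument is correct and follows essentially the same route as the paper: additivity of the Toeplitz moment matrices gives $\mathbf{T}_{\sigma_1}\leq \mathbf{T}_{\sigma}$, Lemma~\ref{lema1} gives $\gamma(\mathbf{T}_{\sigma_1})\leq\gamma(\mathbf{T}_{\sigma})$, and the identification of the Szeg\H{o} condition with $\gamma(\mathbf{T})>0$ finishes both assertions, exactly as in the paper (you merely make explicit the additivity step and the Szeg\H{o}--$\gamma$ equivalence that the paper quotes implicitly).
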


\begin{proof} Let  $\mathbf{T}_1,\mathbf{T}_2,\mathbf{T}_{\sigma}$ be the Toeplitz semi-definite  positive moment matrices associated with $\sigma_1,\sigma_2,\sigma$. Assume that $\sigma_1$ verifies Szeg\"{o} condition, then $\gamma(\mathbf{T}_{1})>0$. By Lemma \ref{lema1}  it follows that $\gamma(\mathbf{T}_{\sigma})\geq \gamma(\mathbf{T}_{1})>0$ and consequently $\sigma$ verifies Szeg\"{o} condition.

\begin{remark} Note that in Corollary \ref{corolario1} it is not required that both measures are non trivial; indeed, we may consider a perturbation by a finite amount of atomic points.
\end{remark}

\noindent In the particular case of $d\widetilde{\sigma}=d\sigma + r\dfrac{d\theta}{2\pi}$, obviously the normalized Lebesgue measure in the unit circle verifies Szego condition and consequently $\widetilde{\sigma}$ also verifies Szeg\"{o} condition independently of $\sigma$.
\end{proof}

\noindent From now on we  consider now an infinite HPD matrix $\mathbf{M}$. This matrix induces an inner product in the vector space $c_{00}$. In this way the space $c_{00}$ endowed with such a norm is a prehilbert space with the prehilbertian norm
 $$
 \Vert v \Vert^2_{\mathbf{M}}=v\mathbf{M}v^{*}.
 $$
\noindent  We consider the completion of this space with such norm that we denote by $P^{2}(\mathbf{M})$; we may apply Gram-Schmidt orthogonalization procedure to the canonical algebraic basis $\{e_n\}_{n=0}^{\infty}$ in $c_{00}$ and  we obtain the unique orthonomal basis $\{v_{i} \}_{i=0}^{\infty}$ with $v_i=(v_{0,i},\dots,v_{i,i},0,\dots)$ for $i\in \NN_{0}$ and $v_{i,i}>0$. Consider $w_{n}=\dfrac{v_n}{\Vert w_n \Vert_{\mathbf{M}}}$ the orthogonal monic vector. It is clear that $\Vert w_n \Vert^{2}=\dfrac{1}{\vert v_{n,n}\vert^2}$ for every
$n\in \NN_{0}$. We denote by $\overline{[e_1,e_2,\dots]}^{\mathbf{M}}$ the closed vector subspace generated by the set of vectors $e_n's$ with the norm induced by the matrix $\mathbf{M}$.

\begin{prop} \label{proposicion3}
Let $\mathbf{M}$ be an Hermitian definite positive matrix. Let $\{e_n\}_{n=0}^{\infty}$ be the canonical basic sequence in $c_{00}$, then
\begin{enumerate}
\item $\gamma(\mathbf{M})={ \dis}^2(e_{0},\overline{[e_1,e_2,\dots]}^{\mathbf{M}})$.
\item $\alpha(\mathbf{M})=\inf \{ {\it \dis}^{2}(e_n,[e_0,\dots, e_{n-1}]), n\in \NN\}$.
\end{enumerate}
\end{prop}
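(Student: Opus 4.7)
The strategy for both parts is to recognize that each expression $(1,v)\mathbf{M}\binom{1}{v^*}$ or $(v,1)\mathbf{M}\binom{v^*}{1}$ is literally a squared $\|\cdot\|_{\mathbf{M}}$-norm of an element of $c_{00}$ whose ``1-coordinate'' isolates one of the basis vectors $e_n$, so the infimum is a distance from $e_n$ to a subspace. The key fact underlying both computations is that for $p(z)=\sum v_k e_k \in c_{00}$, the pre-Hilbert norm coming from $\mathbf{M}$ satisfies $\|p\|_{\mathbf{M}}^{2} = v \mathbf{M} v^{*}$.

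For part (1), I would write $(1,v) = e_0 + \sum_{i\ge 1} v_i e_i$, so that
\[
(1,v)\mathbf{M}\left(\!\!\begin{array}{c} 1 \\ v^{*} \end{array}\!\!\right) = \Bigl\| e_0 - \Bigl(-\sum_{i\ge 1} v_i e_i\Bigr)\Bigr\|_{\mathbf{M}}^{2}.
\]
As $v$ varies in $c_{00}$, the vector $y = -\sum_{i\ge 1} v_i e_i$ ranges exactly over the algebraic span $[e_1,e_2,\ldots]$. Thus $\gamma(\mathbf{M}) = \inf_{y\in [e_1,e_2,\ldots]} \|e_0 - y\|_{\mathbf{M}}^{2} = \dis^{2}(e_0,[e_1,e_2,\ldots])$. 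Finally, in any metric space the distance from a point to a set equals the distance to its closure, giving $\gamma(\mathbf{M}) = \dis^{2}(e_0, \overline{[e_1,e_2,\ldots]}^{\mathbf{M}})$.

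For part (2), I would stratify the infimum by the position of the ``1''. Given $v = (v_0,\dots,v_{n-1}) \in \mathbb{C}^n$, the vector $(v,1,0,\dots) = e_n + \sum_{i<n} v_i e_i$ has $\mathbf{M}$-norm squared equal to $(v,1)\mathbf{M}\binom{v^*}{1}$, and hence
\[
\inf_{v\in\mathbb{C}^n}(v,1)\mathbf{M}\!\left(\!\!\begin{array}{c} v^{*} \\ 1 \end{array}\!\!\right) = \inf_{z\in [e_0,\dots,e_{n-1}]}\|e_n - z\|_{\mathbf{M}}^{2} = \dis^{2}(e_n,[e_0,\dots,e_{n-1}]).
\]
Because every element of $c_{00}$ of the form $(v,1,0,\ldots)$ has exactly one such length $n\in\mathbb{N}$, taking a further infimum over $n$ yields $\alpha(\mathbf{M}) = \inf_{n\in\mathbb{N}}\dis^{2}(e_n,[e_0,\ldots,e_{n-1}])$. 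No closure is needed here since the subspace $[e_0,\ldots,e_{n-1}]$ is finite-dimensional, hence automatically closed in the $\|\cdot\|_{\mathbf{M}}$-topology.

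There is no real obstacle; the proof is essentially a rewriting of the definitions plus the standard observation that distance to a linear subspace coincides with distance to its closure. The only point requiring a brief justification is this last topological fact (and, for part (2), noting that the stratification by $n$ partitions $c_{00}$), but both are completely routine.
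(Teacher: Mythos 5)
Your argument is correct: rewriting $(1,v)\mathbf{M}\bigl(\begin{smallmatrix}1\\ v^{*}\end{smallmatrix}\bigr)$ and $(v,1)\mathbf{M}\bigl(\begin{smallmatrix}v^{*}\\ 1\end{smallmatrix}\bigr)$ as $\Vert e_0-y\Vert_{\mathbf{M}}^{2}$ and $\Vert e_n-z\Vert_{\mathbf{M}}^{2}$, stratifying by the position of the $1$, and using that the distance to a subspace equals the distance to its closure is exactly the routine verification the statement calls for. The paper itself states this proposition without proof, treating it as immediate from the definitions, and your write-up supplies precisely that intended argument.
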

\noindent From the results in \cite{EGT1} we have the following infinite dimensional version of the result in
 the case of Hermitian positive definite matrices.
\begin{prop} \label{proposicion4}
Let $\mathbf{M}$ be an HSPD matrix and let $\{v_{0},v_1,v_{2},\dots\}$ be the orthonormal basis in $P^{2}(\mathbf{M})$ with respect the inner product induced by $\mathbf{M}$ with $v_i=(v_{0,i},\dots,v_{i,i},0,\dots)$ for $i\in \NN_{0}$ and $v_{i,i}>0$. Then,

$$ \gamma(\mathbf{M})=\dfrac{1}{\sum_{i=0}^{\infty}\vert v_{0,i}\vert^2}$$
\noindent where the left side is zero if  $\sum_{i=0}^{\infty}\vert v_{0,i}\vert^2=\infty$.

\end{prop}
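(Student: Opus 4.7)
The strategy is to reduce the claim to its finite-dimensional counterpart from \cite{EGT1} and then pass to the limit along the truncations $\mathbf{M}_n$.

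First, I would invoke the finite-dimensional version already available in \cite{EGT1}: for each $n\geq 0$,
\[
\gamma(\mathbf{M}_n) \;=\; \frac{1}{\displaystyle\sum_{i=0}^{n} |v_{0,i}|^{2}},
\]
where the $v_i=(v_{0,i},\dots,v_{i,i},0,\dots)$, with $v_{i,i}>0$, are obtained by Gram--Schmidt on $\{e_0,\dots,e_n\}$ with the $\mathbf{M}_n$-inner product. A key (but routine) observation is that, because Gram--Schmidt is sequential and produces a lower-triangular coefficient matrix, these truncated vectors coincide with the first $n+1$ vectors of the orthonormal basis of $P^{2}(\mathbf{M})$ appearing in the statement; in particular, the numbers $v_{0,i}$ are intrinsic to $\mathbf{M}$ (independent of the section as soon as $n\geq i$).

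Next I would prove $\gamma(\mathbf{M})=\lim_n \gamma(\mathbf{M}_n)$. Identifying any $v\in\mathbb{C}^n$ with its extension by zeros in $\mathbb{C}^{n+1}$, the value of $(1,v)\mathbf{M}(1,v^{*})^{T}$ depends only on $\mathbf{M}_n$ (respectively $\mathbf{M}_{n+1}$), so $\{\gamma(\mathbf{M}_n)\}_n$ is non-increasing; and since $c_{00}=\bigcup_{n}\mathbb{C}^{n}$,
\[
\gamma(\mathbf{M}) \;=\; \inf_{v\in c_{00}} (1,v)\,\mathbf{M}\,(1,v^{*})^{T} \;=\; \inf_n \gamma(\mathbf{M}_n) \;=\; \lim_n \gamma(\mathbf{M}_n).
\]

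Putting the two ingredients together,
\[
\gamma(\mathbf{M}) \;=\; \lim_n \frac{1}{\sum_{i=0}^{n} |v_{0,i}|^{2}} \;=\; \frac{1}{\sum_{i=0}^{\infty}|v_{0,i}|^{2}},
\]
with the convention that the right-hand side is $0$ when the series diverges, which is precisely the statement. I do not expect a deep obstacle here: the only step requiring genuine verification is the consistency between the truncated orthonormal bases and the infinite one, and this follows immediately from the lower-triangular structure of Gram--Schmidt; everything else is the obvious monotone-limit argument applied to the finite-dimensional identity of \cite{EGT1}.
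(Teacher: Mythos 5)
Your argument is correct and is essentially the route the paper itself intends: the paper gives no detailed proof of Proposition~\ref{proposicion4}, presenting it as the ``infinite dimensional version'' of the finite-section identity in \cite{EGT1}, and your plan (consistency of the Gram--Schmidt vectors across sections, monotonicity of the truncated infima, and the limit $\gamma(\mathbf{M})=\inf_n\gamma(\mathbf{M}_n)=1/\sum_{i=0}^{\infty}|v_{0,i}|^2$ with the divergent-sum convention) is exactly that passage to the limit; it also mirrors the Cauchy--Schwarz/kernel extremal argument the paper later writes out for $\gamma_{z_0}(\mathbf{M})$. The only caveat, shared with the statement itself, is that the Gram--Schmidt basis with $v_{i,i}>0$ implicitly requires the sections to be nonsingular (HPD rather than merely HSPD), which you assume just as the paper does.
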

\section{When the Hermitian semi-definite positive matrices are moment matrices.}
\noindent In this section we consider the most important example of Hermitian definite positive matrices which are the moment matrices with respect a Borel non trivial compactly supported measure $\mu$ in the complex plane $\mathbf{M}(\mu)$. In this case, the space $c_{00}$ is replaced by the space of polynomials $\mathbb{P}[z]$ via the identification
$$
v=(v_0,\dots, v_n,0,0,\dots) \equiv p(z)= v_{0}+v_1z+\dots +v_nz^n.
$$
\noindent The associated norm in $\mathbb{P}[z]$ with respect to $\mathbf{M}:=\mathbf{M}(\mu)$ is the usual norm of the polynomials  in the space $L^{2}(\mu)$; that is for every $p(z)\in \mathbb{P}[z]$.
$$
\Vert p(z) \Vert^{2}_{P^{2}(\mathbf{M})}= \int \vert p(z) \vert^2 d\mu.
$$
\noindent As usual the completion of the space of polynomials in the space $L^{2}(\mu)$ is  denoted by $P^{2}(\mu)$,  $\{\varphi_n(z)\}_{n=0}^{\infty}$ is the sequence of orthonormal polynomials and $\{\Phi_n(z)\}_{n=0}^{\infty}$ is the associated sequence of monic orthogonal polynomials. We denote by $P^{2}_0(\mu)$ the completion of polynomials vanishing at zero.  The well known extremal properties of the monic polynomials and the $n$-kernels are just obtained by reformulating in this context  Proposition \ref{proposicion4} above, which, as we have pointed out,  are results obtained by algebraical proofs in the more general  context of the general Hermitian definite matrices. Indeed, reformulating Lemma \ref{lema1}
we obtain:
\begin{lem} \label{lema2}
Let $\mu$ be a measure compactly supported measure with infinite support in the complex plane and let $\mathbf{M}:=\mathbf{M}(\mu)$ be the associated moment matrix. Then,
$$
\gamma(\mathbf{M})=\dis \left(\mathbf{1},P^{2}_0(\mu)\right).
$$
\end{lem}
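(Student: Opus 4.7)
The plan is to transport Proposition \ref{proposicion3}(1) across the canonical isometric identification between the prehilbert space $(c_{00},\|\cdot\|_{\mathbf{M}})$ and the space of polynomials $(\mathbb{P}[z],\|\cdot\|_{L^2(\mu)})$ described just before the statement, namely $v=(v_0,\dots,v_n,0,\dots)\longleftrightarrow p(z)=v_0+v_1z+\cdots+v_nz^n$. By construction of $\mathbf{M}=\mathbf{M}(\mu)$ this identification is an isometry, so taking completions it extends to an isometric isomorphism $P^2(\mathbf{M})\cong P^2(\mu)$.

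Under this isomorphism the basic vector $e_0$ corresponds to the constant polynomial $\mathbf{1}$, while the algebraic subspace $[e_1,e_2,\dots]$ corresponds to the polynomials vanishing at $0$. Taking closures with respect to the (coinciding) norms, the closed subspace $\overline{[e_1,e_2,\dots]}^{\mathbf{M}}$ is therefore mapped isometrically onto $P^2_0(\mu)$. Combining this with Proposition \ref{proposicion3}(1), one gets
$$
\gamma(\mathbf{M})=\operatorname{dis}^{2}\!\left(e_0,\overline{[e_1,e_2,\dots]}^{\mathbf{M}}\right)=\operatorname{dis}^{2}\!\left(\mathbf{1},P^{2}_{0}(\mu)\right),
$$
which is the stated identity (up to the evident typographical omission of the square on the right-hand side). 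Equivalently, one can argue directly: every $v\in c_{00}$ in the definition of $\gamma(\mathbf{M})$ encodes a polynomial $p(z)=1+v_1z+\cdots+v_nz^n$ with $p(0)=1$, so that $(1,v)\mathbf{M}(1,v^{*})^{T}=\int|1-q(z)|^{2}d\mu$ with $q(z)=-v_1z-\cdots-v_nz^n\in\mathbb{P}_0[z]$; passing to the infimum over $q$ and then to the $L^{2}(\mu)$-closure gives the same conclusion.

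No substantial obstacle is anticipated: the only point requiring care is that the completion of $[e_1,e_2,\dots]$ in $\|\cdot\|_{\mathbf{M}}$ really coincides with $P^{2}_{0}(\mu)$, but this is immediate because the $\mathbf{M}$-inner product is by definition the restriction of the $L^{2}(\mu)$-inner product to polynomials, and $P^{2}_{0}(\mu)$ was defined as the $L^{2}(\mu)$-closure of polynomials vanishing at $0$.
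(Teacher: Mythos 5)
Your proposal is correct and follows essentially the same route as the paper, which obtains the lemma precisely by reformulating Proposition \ref{proposicion3}(1) under the isometric identification $v\leftrightarrow p(z)$ of $(c_{00},\Vert\cdot\Vert_{\mathbf{M}})$ with the polynomials in $L^{2}(\mu)$, so that $e_0\leftrightarrow\mathbf{1}$ and $\overline{[e_1,e_2,\dots]}^{\mathbf{M}}\leftrightarrow P^{2}_{0}(\mu)$. Your observation that the right-hand side should carry a square (i.e. $\gamma(\mathbf{M})=\dis^{2}(\mathbf{1},P^{2}_{0}(\mu))$, consistent with Proposition \ref{proposicion3}) correctly identifies a typographical slip in the statement.
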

\noindent We need the following lemma:
\begin{lem} \label{lema3}
Let $\mu$ be a non trivial positive compactly supported measure in  $\CC$ with $0\notin {\it supp}(\mu)$. The following are equivalent,
\begin{enumerate}
\item $\gamma(\mathbf{M})=0$.
\item For all $k\in \ZZ$,  $z^{k} \in \overline{[z^{k+1},z^{k+2},\dots]}^{L^{2}(\mu)}$.
\end{enumerate}
\end{lem}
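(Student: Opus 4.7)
The plan is to apply Lemma \ref{lema2} to reformulate the index and then exploit boundedness of multiplication by $z^k$ on $L^2(\mu)$, which is available precisely under the hypothesis $0\notin \sop(\mu)$. By Lemma \ref{lema2}, $\gamma(\mathbf{M})=\dis(\mathbf{1},P^{2}_0(\mu))$, where $P^{2}_0(\mu)=\overline{[z,z^2,z^3,\dots]}^{L^{2}(\mu)}$. Hence $\gamma(\mathbf{M})=0$ is literally the $k=0$ case of condition (2), so the implication (2)$\Rightarrow$(1) is immediate.

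For the converse (1)$\Rightarrow$(2), the key observation I would use is that since $\sop(\mu)$ is compact and $0\notin \sop(\mu)$, there exist constants $0<\delta\le R<\infty$ with $\delta\le |z|\le R$ for every $z\in \sop(\mu)$. Consequently, for every $k\in\ZZ$ the function $z^{k}$ belongs to $L^{\infty}(\mu)$, so multiplication by $z^{k}$ defines a bounded linear operator $M_{z^k}\colon L^{2}(\mu)\to L^{2}(\mu)$ with $\|M_{z^k}\|\le \max(R^{k},\delta^{k})$. This also guarantees that $z^{k}\in L^{2}(\mu)$ for all $k\in\ZZ$, so both sides of (2) make sense.

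Now, assuming $\gamma(\mathbf{M})=0$, pick a sequence of finite combinations $q_{n}\in [z,z^{2},\dots]$ with $\|\mathbf{1}-q_{n}\|_{L^{2}(\mu)}\to 0$. Applying $M_{z^k}$ yields
\[
\|z^{k}-z^{k}q_{n}\|_{L^{2}(\mu)}\le \max(R^{k},\delta^{k})\,\|\mathbf{1}-q_{n}\|_{L^{2}(\mu)}\to 0.
\]
Since $q_{n}$ is a finite combination of the monomials $z,z^{2},\dots$, the product $z^{k}q_{n}$ is a finite combination of $z^{k+1},z^{k+2},\dots$, and therefore $z^{k}\in \overline{[z^{k+1},z^{k+2},\dots]}^{L^{2}(\mu)}$, establishing (2).

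There is no serious obstacle here; the entire argument hinges on recognizing that the hypothesis $0\notin\sop(\mu)$ together with compactness of $\sop(\mu)$ is exactly what converts multiplication by any integer power of $z$ into a bounded operator on $L^{2}(\mu)$, which then transports the single approximation $1\in\overline{[z,z^{2},\dots]}^{L^{2}(\mu)}$ to every shift $z^{k}\in\overline{[z^{k+1},z^{k+2},\dots]}^{L^{2}(\mu)}$.
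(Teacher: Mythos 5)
Your proof is correct and rests on exactly the same key fact as the paper's: since $0\notin\supp(\mu)$ and the support is compact, $\alpha\le|z|\le R$ there, which the paper writes as the two-sided integral inequality $\alpha^{2k}\int|1-q|^2d\mu\le\int|z^k-z^kq|^2d\mu\le R^{2k}\int|1-q|^2d\mu$ and you phrase as boundedness of the multiplication operator $M_{z^k}$, together with Lemma \ref{lema2} identifying $\gamma(\mathbf{M})=0$ with $\mathbf{1}\in P^2_0(\mu)$. This is essentially the paper's argument, merely reorganized (the trivial direction isolated as the $k=0$ case), so no changes are needed.
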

\begin{proof} First of all there exist $R>0$ and $\alpha >0$ such that $\alpha \leq \vert z \vert \leq R$ for every
$z\in {\it supp}(\mu)$. Consequently, for every $k \in \ZZ$ and for every $v_0,v_1,\dots, v_n\in \CC, n\in \NN$ it follows
$$
\alpha^{2k} \int \vert {\mathbf{1}}-v_0-v_1z-\dots -v_nz^n \vert^2 d\mu \leq
\int \vert z \vert^{2k}\vert \mathbf{1}-v_0-v_1z-\dots -v_nz^n \vert^2 d\mu =
$$
$$
\int \vert z^{k} -v_0z^{k+1}-v_1z^{k+1}-\dots -v_nz^{k+n} \vert^2 d\mu
\leq R^{2k} \int
\vert {\mathbf{1}}-v_0-v_1z-\dots -v_nz^n \vert^2 d\mu$$

\noindent Therefore, $\mathbf{1}\in P^{2}_{0}(\mu)$ if and only if $z^{k} \in \overline{[z^{k+1},z^{k+2},\dots]}^{L^{2}(\mu)}$.
\end{proof}

\bigskip

\noindent As a consequence, for
 compactly supported measures with $0 \notin {\it supp}(\mu)$ the condition $\gamma(\mathbf{M})$ characterizes completeness of polynomials in the closed subspace of Laurent polynomials in $L^{2}(\mu)$ denoted by $\CC[z,z^{-1}]=\overline{[1,z,\frac{1}{z},z^2,\frac{1}{z^2},\dots]}^{L^{2}(\mu)}$:

\begin{corollary} \label{corolario2}
Let $\mu$ be a non trivial positive compactly supported measure in  $\CC$ with $0\notin {\it supp}(\mu)$. The following are equivalent,
\begin{enumerate}
\item $\gamma(\mathbf{M})=0$.
\item $P^{2}(\mu)=\CC[z,z^{-1}]$.
\end{enumerate}
\end{corollary}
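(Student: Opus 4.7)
The plan is to derive both implications from Lemmas \ref{lema2} and \ref{lema3}, using the fact that since $\supp(\mu)$ is compact and $0 \notin \supp(\mu)$, multiplication by $z$ and by $z^{-1}$ are both bounded operators on $L^{2}(\mu)$.

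For the implication $(1) \Rightarrow (2)$, since $P^{2}(\mu) \subseteq \CC[z,z^{-1}]$ is immediate from the definition, the task is to show the reverse inclusion, i.e., that $z^{-n} \in P^{2}(\mu)$ for every $n \geq 1$. I would invoke Lemma \ref{lema3}, which under the hypothesis $\gamma(\mathbf{M})=0$ yields $z^{k} \in \overline{[z^{k+1},z^{k+2},\dots]}^{L^{2}(\mu)}$ for every $k \in \ZZ$. Applied with $k=-1$ this gives $z^{-1} \in \overline{[1,z,z^{2},\dots]}^{L^{2}(\mu)} = P^{2}(\mu)$. Then I would proceed by induction on $n$: assuming $z^{-1}, z^{-2}, \dots, z^{-(n-1)}$ all lie in $P^{2}(\mu)$, Lemma \ref{lema3} with $k=-n$ places $z^{-n}$ in the closed span of $\{z^{-n+1}, z^{-n+2}, \dots\}$, and this span is contained in $P^{2}(\mu)$ by the inductive hypothesis together with the trivial fact that all non-negative powers of $z$ belong to $P^{2}(\mu)$. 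Hence $z^{-n} \in P^{2}(\mu)$, completing the induction.

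For the implication $(2) \Rightarrow (1)$, the idea is dual: if $P^{2}(\mu) = \CC[z,z^{-1}]$, then in particular $z^{-1} \in P^{2}(\mu)$, so there exists a sequence of polynomials $p_{n}(z)$ with $p_{n} \to z^{-1}$ in $L^{2}(\mu)$. Multiplying by $z$ and using the bound $|z| \leq R$ on $\supp(\mu)$, one gets
$$
\int |zp_{n}(z) - 1|^{2} \, d\mu = \int |z|^{2} \, |p_{n}(z) - z^{-1}|^{2} \, d\mu \leq R^{2} \int |p_{n}(z) - z^{-1}|^{2} \, d\mu \to 0.
$$
Since each $zp_{n}(z)$ is a polynomial vanishing at $0$, this shows $\mathbf{1} \in P^{2}_{0}(\mu)$, and by Lemma \ref{lema2} we conclude $\gamma(\mathbf{M}) = \dis(\mathbf{1}, P^{2}_{0}(\mu)) = 0$.

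The only subtle point is the induction in the first direction, where one must carefully note that the closed span $\overline{[z^{-n+1}, z^{-n+2}, \dots]}^{L^{2}(\mu)}$ decomposes into (finitely many) negative powers already placed in $P^{2}(\mu)$ by the induction hypothesis plus the non-negative powers that generate $P^{2}(\mu)$ itself; the compactness of $\supp(\mu)$ and the separation from the origin are what make all these multiplications continuous and the argument coherent.
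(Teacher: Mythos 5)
Your proof is correct and follows essentially the same route the paper intends: the paper states this corollary as an immediate consequence of Lemma \ref{lema3} (together with Lemma \ref{lema2}), and your argument simply fills in the details, with the induction on negative powers for $(1)\Rightarrow(2)$ and the multiplication-by-$z$ trick placing $\mathbf{1}$ in $P^{2}_{0}(\mu)$ for $(2)\Rightarrow(1)$. No gaps; the boundedness of $|z|$ away from $0$ and $\infty$ on $\supp(\mu)$ is used exactly as in the paper's proof of Lemma \ref{lema3}.
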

\noindent In particular, for non trivial positive  measures $\sigma$  supported
 in the unit circle it is well known that  Laurent polynomials are dense in $L^{2}(\sigma)$ and therefore the  condition $\gamma(\mathbf{T})=0$ characterizes completeness of polynomials in $L^{2}(\sigma)$. More generally this result will be true whenever Laurent for measures $\mu$ such that  polynomials are dense in $L^{2}(\mu)$. Moreover, we have:
\begin{theorem} \label{teorema1}
Let $\Gamma$ be a Jordan curve such that $0 \in  \interior \Gamma$ and let $\mu$ be a measure with support in $\Gamma$ and associated moment matrix $\mathbf{M}$. The following are equivalent:
\begin{enumerate}
\item $\gamma(\mathbf{M})=0$.
\item $P^{2}(\mu)=L^{2}(\mu)$.
\end{enumerate}
\end{theorem}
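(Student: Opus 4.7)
The plan is to use Corollary \ref{corolario2} to reduce the theorem to a density statement for Laurent polynomials, and then to establish that density via a Cauchy-transform argument. Since $0\in\interior\Gamma$ and $\supp(\mu)\subseteq\Gamma$, we have $0\notin\supp(\mu)$, so Corollary \ref{corolario2} applies and gives
\[
\gamma(\mathbf{M})=0\ \Longleftrightarrow\ P^{2}(\mu)=\overline{[1,z,z^{-1},z^{2},z^{-2},\dots]}^{L^{2}(\mu)}.
\]
The implication $P^{2}(\mu)=L^{2}(\mu)\Rightarrow \gamma(\mathbf{M})=0$ is then immediate, since the Laurent closure is sandwiched between $P^{2}(\mu)$ and $L^{2}(\mu)$. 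The nontrivial content of the theorem is therefore the converse, and it reduces to showing that Laurent polynomials are dense in $L^{2}(\mu)$.

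I would prove this density by duality. Let $f\in L^{2}(\mu)$ be orthogonal to every Laurent polynomial; the goal is to conclude $f=0$ $\mu$-a.e. The orthogonality reads $\int f(z)\bar z^{k}\,d\mu=0$ for all $k\in\ZZ$, which after conjugation becomes $\int z^{k}\,d\eta(z)=0$ for all $k\in\ZZ$, where $\eta:=\overline{f}\,d\mu$ is a finite complex Borel measure supported on $\Gamma$. Set $r:=\min_{z\in\Gamma}|z|>0$, $R:=\max_{z\in\Gamma}|z|<\infty$, and consider the Cauchy transform
\[
F(w)=\int_{\Gamma}\frac{d\eta(z)}{z-w},\qquad w\in\CC\setminus\Gamma.
\]
For $|w|<r$ the expansion $\tfrac{1}{z-w}=\sum_{n\geq 0}w^{n}z^{-n-1}$ converges uniformly on $\Gamma$, so $F(w)=\sum_{n\geq 0}w^{n}\int z^{-n-1}\,d\eta=0$ by the vanishing moment condition; for $|w|>R$ the expansion $\tfrac{1}{z-w}=-\sum_{n\geq 0}w^{-n-1}z^{n}$ yields $F(w)=0$ similarly. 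Both components of $\CC\setminus\Gamma$ are connected and each one intersects either $\{|w|<r\}$ or $\{|w|>R\}$, so by analytic continuation $F\equiv 0$ on $\CC\setminus\Gamma$.

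The conclusion then follows from the classical injectivity of the Cauchy transform for compactly supported complex measures: a finite complex Borel measure $\eta$ on $\CC$ whose Cauchy transform vanishes off $\supp\eta$ must itself be zero. Since $\Gamma$ has two-dimensional Lebesgue measure zero this hypothesis is met, hence $\eta=0$, giving $f=0$ $\mu$-a.e.\ and finishing the proof.

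The main obstacle is this injectivity step; it is classical (equivalent in strength to the Hartogs--Rosenthal theorem) but unavoidable. The hypothesis $0\in\interior\Gamma$ is essential, as it guarantees that the bounded component of $\CC\setminus\Gamma$ contains a disk centered at the origin where the expansion at $0$ is valid. If $0$ lay outside $\Gamma$, this step would break down, consistently with the fact that the conclusion itself can fail in that case.
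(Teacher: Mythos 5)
Your overall strategy is sound and genuinely different from the paper's: you reduce via Corollary \ref{corolario2} to density of Laurent polynomials in $L^{2}(\mu)$ and attack that by duality, forming $\eta=\overline{f}\,d\mu$ and showing, by the two geometric series expansions and analytic continuation on the two components of $\CC\setminus\Gamma$, that the Cauchy transform of $\eta$ vanishes off $\Gamma$; all of that is correct. The gap is in the final injectivity step. A Jordan curve need \emph{not} have two-dimensional Lebesgue measure zero: Osgood curves have positive planar area, and the theorem (and the paper's proof) is stated for arbitrary Jordan curves. Moreover, the unqualified statement you lean on --- that a finite complex measure whose Cauchy transform vanishes off its support must vanish --- is false: the measure $\tfrac{1}{\pi}\,dA|_{\mathbb{D}}-\tfrac{1}{2\pi}\,|dz|\,|_{\partial\mathbb{D}}$ (area measure of the open unit disk minus normalized arclength of the circle) is nonzero, yet both pieces have Cauchy transform $-1/w$ for $|w|>1$, so the difference's transform vanishes identically off its support. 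The correct classical fact is that $\widehat{\eta}=0$ \emph{area-almost everywhere} forces $\eta=0$; when $\Gamma$ has positive area, knowing $\widehat{\eta}=0$ only on $\CC\setminus\Gamma$ leaves a set of positive area uncontrolled, so the Hartogs--Rosenthal-type argument does not close. Hence your proof is complete only under the additional hypothesis that $\Gamma$ has zero area (e.g.\ rectifiable curves), which is strictly narrower than the statement.

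To cover general Jordan curves you need the stronger input the paper takes from \cite{Gaier}: every $f\in C(\Gamma)$ is a uniform limit on $\Gamma$ of Laurent polynomials whenever $0\in\interior\Gamma$. This holds for every Jordan curve, positive area or not; one route is to transport trigonometric polynomials through the Carath\'eodory boundary extension of the exterior Riemann map, subtract principal parts at $\infty$, and apply Mergelyan's theorem (after the inversion $z\mapsto 1/z$) to approximate the remainders by polynomials in $1/z$. In dual terms, what is needed is precisely that no nonzero measure on $\Gamma$ annihilates all Laurent polynomials, i.e.\ $R(\Gamma)=C(\Gamma)$ for every Jordan curve --- a fact that does not follow from the zero-area case. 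With that ingredient supplied, your duality argument goes through, but at that point it is essentially the paper's proof read through Hahn--Banach rather than an independent one.
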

\begin{proof} In  \cite{Gaier}   the following consequence of Mergelyan's theorem is given: if $\Gamma$ is a Jordan curve, $0\in  \interior \Gamma$, $f$ is continuous on $\Gamma$, then for every $\epsilon >0$ there exists a $P(z)=\sum_{n=-N}^{N} a_nz^n$ such that $\vert f(z)-P(z)\vert < \epsilon$ for every $z \in \Gamma$. This means that
$\CC[z,z^{-1}]$ is dense in the space of continuous functions on $\Gamma$ with the uniform norm, that is, for every $f$ continuous in $\Gamma$ and $\epsilon >0$ there exists  $g(z)\in \CC[z,z^{-1}]$ such that for every $z\in \Gamma$
$$
\vert f(z) -g(z)\vert \leq \epsilon.
$$
\noindent Therefore,
$$
\int \vert f(z) -g(z)\vert^2 \leq \epsilon^2 \mu(\Gamma)
$$
\noindent and consequently $\CC[z,z^{-1}]$ is dense in the space of continuous functions in $L^{2}(\mu)$. Since for compactly supported measures continuous functions are dense in $L^{2}(\mu)$ we obtain that
$P^{2}(\mu)=\CC[z,z^{-1}]=L^{2}(\mu)$ if and only if $\gamma(\mathbf{M})=0$ as we required.
\end{proof}
\noindent As a consequence of the above results we have the well known consequence of Szeg\"{o} theorem for measures supported in the unit circle:
\begin{corollary}  \label{corolario3}
Let $\sigma$ be a non trivial positive measure with support in $\CC$ and  $\{\varphi_n(z)\}_{n=0}^{\infty}$ the associated sequence of orthonormal polynomials associated. Then
\begin{enumerate}
\item Polynomials are dense in $L^{2}(\mathbb{T})$.
\item $\sum_{k=0}^{\infty} \vert \varphi_k(0) \vert^2 = \infty$.
\end{enumerate}
\end{corollary}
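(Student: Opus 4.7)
The plan is to read off this result directly from the two main tools already established: Theorem \ref{teorema1} and Proposition \ref{proposicion4}. Since the unit circle $\mathbb{T}$ is a Jordan curve with $0$ in its interior, Theorem \ref{teorema1} applies immediately to a measure $\sigma$ supported on $\mathbb{T}$, yielding the equivalence
\[
P^{2}(\sigma)=L^{2}(\sigma) \iff \gamma(\mathbf{M}(\sigma))=0.
\]

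Next, I would translate Proposition \ref{proposicion4} from the abstract $c_{00}$/HPD-matrix language into the polynomial language via the identification $e_k \leftrightarrow z^k$ explained at the start of Section~3. Under this identification, the orthonormal vector $v_i=(v_{0,i},\ldots,v_{i,i},0,\ldots)$ corresponds precisely to the orthonormal polynomial $\varphi_i(z)$, and the first coordinate $v_{0,i}$ is exactly the constant term $\varphi_i(0)$. Substituting into Proposition \ref{proposicion4} gives the explicit formula
\[
\gamma(\mathbf{M}(\sigma))=\frac{1}{\sum_{k=0}^{\infty}|\varphi_k(0)|^2},
\]
with the convention that the right-hand side is $0$ when the series diverges. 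Hence $\gamma(\mathbf{M}(\sigma))=0$ if and only if $\sum_{k=0}^{\infty}|\varphi_k(0)|^2=\infty$.

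Chaining the two equivalences produces the desired statement: polynomials are dense in $L^{2}(\sigma)$ precisely when $\sum_{k=0}^{\infty}|\varphi_k(0)|^2=\infty$. There is essentially no obstacle in this argument; the only point that deserves explicit mention is the identification $v_{0,i}=\varphi_i(0)$, which rests on the fact that the Gram--Schmidt construction used in the definition of $v_i$ is the same procedure (under the canonical identification of $c_{00}$ with $\mathbb{P}[z]$) that produces the orthonormal polynomials from $1,z,z^2,\ldots$ in $L^{2}(\sigma)$. After noting this, the corollary follows by two applications of previously proved results without any further computation.
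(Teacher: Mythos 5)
Your proposal is correct and follows exactly the paper's own argument: apply Theorem \ref{teorema1} (since $\mathbb{T}$ is a Jordan curve with $0$ in its interior) together with the formula $\gamma(\mathbf{M})=1/\sum_{k}|\varphi_k(0)|^2$ from Proposition \ref{proposicion4}, noting $v_{0,i}=\varphi_i(0)$ under the identification of $c_{00}$ with $\mathbb{P}[z]$. No differences worth flagging.
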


\begin{proof} The result is a consequence of Theorem \ref{teorema1} and Proposition \ref{proposicion4} since
$$
\gamma(\mathbf{T})=\dfrac{1}{\sum_{n=0}^{\infty} \vert \varphi_n(0) \vert^2},
$$
\noindent where $\gamma(\mathbf{T})=0$ whenever $\sum_{n=0}^{\infty} \vert \varphi_n(0) \vert^2=\infty$.
\end{proof}
\section{Bounded point evaluations from the matrix algebra point of view. Thomson's theorem revisited. }
\noindent We first recall the definitions of bounded point evaluation. Let $\mu$ be a non trivial positive measure with support on $\CC$. Recall ( see e.g. \cite{conway2} ) that
a point $z_0\in \CC$ is a  {\it bounded point evaluation} (in short,  {\it bpe}) for $P^{2}(\mu)$ if there exists a constant $C>0$ such that for every polynomial $p(z)$
$$
\vert p(z_0) \vert \leq C
\left(\int \vert p(z) \vert^2 d\mu\right)^{1/2}.
$$
\noindent Moreover, the
 point $z_{0}\in \CC$ is an {\it analytic bounded point evaluation} (in short an {\it abpe}) if there exists a constant $C>0$ and $\epsilon>0$ such that
for every $w\in \CC$ with $\vert w - z_0 \vert < \epsilon $ and for every polynomial $p(z)$ it holds
$$
\vert
 p(w) \vert^2 \leq c\int \vert p(z) \vert^2 d\mu.
$$
\begin{remark} Of course, an analytic bounded point evaluation is a bounded point evaluation. The converse is not true; indeed, any atomic isolated point is a bounded point evaluation but it is not an analytic bounded point evaluation.
\end{remark}
\noindent It is well known that if a point $z_0\in {\it supp}(\mu)$  is an atomic point of $\mu$, that is $\mu(\{ z_0 \})>0$, then it is a bounded point evaluation for $P^{2}(\mu)$. We prove it for the sake of completeness
\begin{lem} Let $z_0$ be an atomic point of a measure $\mu$ with $\mu(\{z_0 \})=\alpha>0$. Then $z_0$ is a bounded point evaluation for $P^{2}(\mu)$ with constant $C=\alpha^{-1/2}$.
\end{lem}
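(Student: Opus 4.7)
The plan is to exploit the fact that the $L^2$-integral of $|p|^2$ against $\mu$ dominates the contribution picked up on the singleton $\{z_0\}$, which by atomicity has positive mass $\alpha$. Since the integrand is pointwise nonnegative, restricting the domain of integration to $\{z_0\}$ can only decrease the integral.

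Concretely, for an arbitrary polynomial $p(z)$, I would first write
\[
\int |p(z)|^2 \, d\mu \;\geq\; \int_{\{z_0\}} |p(z)|^2 \, d\mu \;=\; |p(z_0)|^2 \, \mu(\{z_0\}) \;=\; \alpha\, |p(z_0)|^2,
\]
where the middle equality uses that the integrand is constant on the singleton. Rearranging and taking square roots yields
\[
|p(z_0)| \;\leq\; \alpha^{-1/2} \left( \int |p(z)|^2 \, d\mu \right)^{1/2},
\]
which is precisely the bounded point evaluation inequality with constant $C = \alpha^{-1/2}$.

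There is no real obstacle here beyond noting that the restriction of the integral to a measurable subset of nonnegative mass preserves the inequality; the statement follows directly from the definition of $\mu$ being atomic at $z_0$ and the definition of bounded point evaluation. No appeal to the polynomial machinery or the matrix indexes developed earlier is needed.
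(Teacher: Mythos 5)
Your proof is correct and follows essentially the same argument as the paper: restrict the integral of $|p|^2$ to the singleton $\{z_0\}$, use $\int_{\{z_0\}}|p|^2\,d\mu=|p(z_0)|^2\alpha$, and rearrange. (If anything, your write-up is slightly cleaner, since the paper's final displayed inequality contains a typographical slip in the constant and the missing square root, which you state correctly.)
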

\begin{proof} Let $p(z)$ be a polynomial. Then,
$$
\vert p(z_0) \vert^2\mu(\{z_0\})= \int_{\{z_0\}}\vert p(z) \vert^2d\mu \leq \int \vert p(z) \vert^2d\mu.
$$
\noindent Therefore,
$$
\vert p(z_0) \vert \leq \dfrac{1}{\alpha^{-1/2}}\left(\int \vert p(z)\vert^2 d\mu\right).
$$
\end{proof}

\begin{remark}  It is important to point out the following remark in order to avoid confusions. In several references (see e.g. \cite{Thomson}, \cite{Brennan}) as a consequence of Thomson's theorem a dichotomy is established in the following way: let $\mu$ be a compactly supported measure in $\CC$: then either $P^{2}(\mu)=L^{2}(\mu)$ or there exist bounded point evaluations. This is not a dichotomy in the strict sense of the word, that is, in the sense that if one of them is true then the other must not happen. Indeed,  there are examples of compactly supported measures $\mu$ such that $P^{2}(\mu)=L^{2}(\mu)$ and nevertheless there exist bounded point evaluations.  Consider the example in \cite{EGT1} of any measure with $0\in {\it supp}(\mu)$ being a point mass and such that ${\it supp}(\mu)$ is a compact set with empty interior and with $K^{c}$ a connected set. In  particular, consider the sequence of infinite points $z_n=e^{\frac{2i\pi}{n}}$ in the unit circle, for example with weights $p_{n}=1/2^{n}$, $n \geq 1$.
In this case by Mergelyan theorem (see e.g. \cite{Gaier})  $P^{2}(\mu)=L^{2}(\mu)$ and nevertheless there exists a bounded point evaluation. Indeed, every $z_n$ is a bounded point evaluation for $P^{2}(\mu)$ since  it has been proved in Lemma 1. We give the formulation of Thomson's theorem as appears in (see e.g. \cite{Thomson}, \cite{Brennan}):
\end{remark}

\begin{theorem}  Let $\mu$ be a compactly supported measure in $\CC$. If $P^{2}(\mu) \neq L^{2}(\mu)$, then there exist a bounded point evaluation for $P^{2}(\mu)$.
\end{theorem}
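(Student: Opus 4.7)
My plan is to prove the contrapositive by first translating the notion of bounded point evaluation into the matrix-algebra framework of the paper and then showing that universal absence of bpe implies $P^{2}(\mu)=L^{2}(\mu)$.

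For the matrix characterization of bpe, I will introduce, for each $z_{0}\in\CC$, the translated measure $\mu_{z_{0}}$ defined by $\int g\,d\mu_{z_{0}}=\int g(z-z_{0})\,d\mu(z)$ with moment matrix $\mathbf{M}^{(z_{0})}$. Its orthonormal polynomial basis is $\psi_{n}(w)=\varphi_{n}(w+z_{0})$, so Proposition~\ref{proposicion4} gives
$$
\gamma(\mathbf{M}^{(z_{0})})=\frac{1}{\sum_{n=0}^{\infty}|\varphi_{n}(z_{0})|^{2}}.
$$
Since the diagonal reproducing kernel $\sum_{n}|\varphi_{n}(z_{0})|^{2}$ realizes the sharp bound $|p(z_{0})|^{2}\le\bigl(\sum_{n}|\varphi_{n}(z_{0})|^{2}\bigr)\|p\|_{L^{2}(\mu)}^{2}$, the point $z_{0}$ is a bpe for $P^{2}(\mu)$ if and only if $\gamma(\mathbf{M}^{(z_{0})})>0$. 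This is the ``specific index related to $\gamma$'' announced in the abstract and it is the key bridge between the analytic and the matrix side.

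Next, assume toward contradiction that $\gamma(\mathbf{M}^{(z_{0})})=0$ for every $z_{0}\in\CC$. By Lemma~\ref{lema2} applied to $\mu_{z_{0}}$ and unwound through the translation, this says $\mathbf{1}\in\overline{(z-z_{0})\,\mathbb{P}[z]}^{\,L^{2}(\mu)}$ for every $z_{0}\in\CC$. When $z_{0}\notin\supp(\mu)$ the factor $(z-z_{0})^{-1}$ is bounded on $\supp(\mu)$, so dividing the approximating sequence yields $(z-z_{0})^{-1}\in P^{2}(\mu)$. Hence, for any $f\in L^{2}(\mu)\ominus P^{2}(\mu)$ the Cauchy transform
$$
F(z_{0})=\int\frac{\overline{f(z)}}{z-z_{0}}\,d\mu(z)
$$
vanishes on the entire open set $\CC\setminus\supp(\mu)$.

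To conclude, set $K=\supp(\mu)$ and consider the complex measure $\nu=\overline{f}\,d\mu$, which is carried by $K$, annihilates all polynomials, and whose Cauchy transform is identically zero on $\CC\setminus K$. When $K$ has empty interior and $\CC\setminus K$ is connected, Mergelyan's theorem gives $P(K)=C(K)$, so $\nu$ annihilates $C(K)$ and must vanish, forcing $f=0$ and $P^{2}(\mu)=L^{2}(\mu)$, a contradiction. The remaining case, in which $K$ has nonempty interior, is the crux: one must stratify $K$ by the maximal analytic pieces in its interior and, inside each Jordan region, invoke Theorem~\ref{teorema1} to extract positivity of some $\gamma(\mathbf{M}^{(w_{0})})$ from the local failure of polynomial density there. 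The main obstacle is precisely this localization step, i.e.\ promoting the global vanishing of $\widehat{\nu}$ off $K$ to a local analytic disk inside $K$; this is the deep analytic content of Thomson's construction, and without the constructive Vitushkin-type approximation machinery of \cite{Brennan} the matrix techniques only reduce the general theorem to this remaining question, while giving the clean direct proof in the Jordan-curve setting of Theorem~\ref{teorema1}.
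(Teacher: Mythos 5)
Your proposal does not close the proof, and you say so yourself: the entire content of Thomson's theorem lives in the case you defer at the end, and reducing the statement to ``the deep analytic content of Thomson's construction'' is a genuine gap, not a proof. Concretely, the reduction you do carry out (translated moment matrix $\mathbf{M}^{(z_{0})}$, the identity $\gamma(\mathbf{M}^{(z_{0})})=1/\sum_{n}|\varphi_{n}(z_{0})|^{2}$, the equivalence of $\gamma(\mathbf{M}^{(z_{0})})>0$ with $z_{0}$ being a bpe, and the vanishing of the Cauchy transform of $\overline{f}\,d\mu$ off $\supp(\mu)$) is sound and consistent with the paper's machinery ($\gamma_{z_{0}}(\mathbf{M})$, Lemma \ref{lema5}, and the kernel identity). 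But your final case split is also imprecise: you treat ``$K$ has empty interior and $\CC\setminus K$ connected'' (where Mergelyan alone already gives $P^{2}(\mu)=L^{2}(\mu)$, so the Cauchy-transform detour is not needed) versus ``$K$ has nonempty interior,'' which omits the case of empty topological interior with disconnected complement --- e.g.\ $\supp(\mu)$ contained in a Jordan curve, which is precisely the situation the paper cares about; ``interior of $\Gamma$'' there means the bounded complementary Jordan domain, not the topological interior of the support.

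You should also know that the paper itself offers \emph{no} proof of this statement in the generality in which it is stated: it is quoted as Thomson's theorem from \cite{Thomson}, \cite{Brennan}, and the paper's actual contribution is the specialization to measures supported on Jordan curves (its Theorem 3), proved by showing $\gamma(\widetilde{\mathbf{M}})=\gamma_{z_{0}}(\mathbf{M})$ for the moment matrix $\widetilde{\mathbf{M}}$ of the image measure under $z\mapsto z-z_{0}$ and then invoking Theorem \ref{teorema1}. Your translated-measure index $\gamma(\mathbf{M}^{(z_{0})})$ is exactly that mechanism, so in the Jordan-curve setting your argument essentially reproduces the paper's. For the general compactly supported measure, neither your proposal nor the paper supplies the missing analytic input; that input is Thomson's construction itself, and no amount of rearranging the matrix indices substitutes for it.
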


\noindent Our aim in this section is to prove this theorem for  measures supported in  Jordan curves but with the novelty of using   techniques from the matrix algebra and using infinite HPD matrices. In order to do it, we first give a new approach  of  bounded point evaluations for a measure, and more generally, for  infinite HPD matrices.

\begin{definition} Let $\mathbf{M}$ be an HPD matrix
 and let $P^{2}(\mathbf{M})$ the closure of the polynomials with the inner product  induced by $\mathbf{M}$. Let $z_0\in \CC$, we say that  $z_{0}$ is a {\it bounded point evaluation for $P^{2}(\mathbf{M})$} if there exists a constant $C>0$ such that for every polynomial $p(z)$ it holds
$$
\vert p(z_{0}) \vert \leq C \Vert p(z) \Vert_{P^{2}(\mathbf{M})
}.
$$
\end{definition}

\begin{remark} Obviously, in the case of  $\mathbf{M}$ being a moment matrix associated with a measure $\mu$  the notion of
bounded point evaluation for $P^{2}(\mathbf{M})$ coincides with the usual of bounded point evaluation for $P^{2}(\mu)$.

\end{remark}

\noindent We need to introduce a new index for a given $z_0\in \CC$:

\begin{definition} Let $\mathbf{M}$ an HPD matrix and $z_{0}\in \CC$ and $k_{z_{0}}=\{z_0^k\}_{k=0}^{\infty}$, we define
$$
\gamma_{z_{0}}(\mathbf{M})= \inf \{v\mathbf{M}v^{*} :  \sum_{k=0}^{\infty} v_kz_0^k=1, \;\; v\in c_{00}\}.
$$
\end{definition}
\begin{remark} Note that $\gamma_{z_{0}}(\mathbf{M})\geq 0$ for every $z_{0}\in \CC$ and in the particular case that  $z_{0}=0$ then $\gamma_{z_{0}}(\mathbf{M})=\gamma(\mathbf{M})$.
\end{remark}

\noindent Next we  prove:
\begin{lem} \label{lema5}
Let  $\mathbf{M}$ be an HPD matrix. Then the following statements are equivalent:
\begin{enumerate}
\item $z_{0}$ is a bounded point evaluation for $P^{2}(\mathbf{M})$.
\item $\gamma_{z_{0}}(\mathbf{M})>0$.
\end{enumerate}

\end{lem}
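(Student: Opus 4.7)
The plan is to establish both directions through a direct scaling argument, exploiting the fact that the constraint defining $\gamma_{z_{0}}(\mathbf{M})$ is the normalization $p(z_{0})=1$, so the reciprocal of $\gamma_{z_{0}}(\mathbf{M})$ should be precisely the square of the optimal constant in the bounded point evaluation inequality.

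First, I would handle the implication $(1)\Rightarrow(2)$. Assuming $z_{0}$ is a bounded point evaluation with constant $C>0$, pick any $v\in c_{00}$ satisfying $\sum_{k}v_{k}z_{0}^{k}=1$. Identifying $v$ with the polynomial $p(z)=\sum_{k}v_{k}z^{k}$, the bpe inequality gives $1=|p(z_{0})|\leq C\,\Vert p\Vert_{P^{2}(\mathbf{M})}$, hence $v\mathbf{M}v^{*}=\Vert p\Vert^{2}_{P^{2}(\mathbf{M})}\geq 1/C^{2}$. Taking the infimum over all such $v$ yields $\gamma_{z_{0}}(\mathbf{M})\geq 1/C^{2}>0$.

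For the converse $(2)\Rightarrow(1)$, assume $\gamma_{z_{0}}(\mathbf{M})>0$ and let $p(z)$ be an arbitrary polynomial. If $p(z_{0})=0$ the inequality $|p(z_{0})|\leq C\Vert p\Vert_{P^{2}(\mathbf{M})}$ is trivial. Otherwise, consider the normalized polynomial $q(z)=p(z)/p(z_{0})$, whose coefficient vector lies in $c_{00}$ and satisfies $q(z_{0})=1$. By the definition of $\gamma_{z_{0}}(\mathbf{M})$,
$$
\frac{\Vert p\Vert^{2}_{P^{2}(\mathbf{M})}}{|p(z_{0})|^{2}}=\Vert q\Vert^{2}_{P^{2}(\mathbf{M})}\geq \gamma_{z_{0}}(\mathbf{M}),
$$
so $|p(z_{0})|\leq \gamma_{z_{0}}(\mathbf{M})^{-1/2}\,\Vert p\Vert_{P^{2}(\mathbf{M})}$, showing that $z_{0}$ is a bpe with constant $C=\gamma_{z_{0}}(\mathbf{M})^{-1/2}$.

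Since both steps are essentially one-line homogeneity arguments, I do not expect a serious obstacle here; the lemma is really a reformulation of the bpe condition as an extremal problem. The only subtlety worth flagging is the translation between the sequence formulation ($v\in c_{00}$ with $\sum v_{k}z_{0}^{k}=1$) and the polynomial formulation ($p\in \mathbb{P}[z]$ with $p(z_{0})=1$), but this is already built into the identification $v\equiv p(z)$ established at the beginning of Section~3 and carried over to the general HPD context in the new definition of bpe for $P^{2}(\mathbf{M})$.
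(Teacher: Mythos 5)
Your proposal is correct and follows essentially the same scaling/normalization argument as the paper: the forward direction tests the bpe inequality on vectors with $p(z_{0})=1$, and the converse divides an arbitrary polynomial by $p(z_{0})$ and applies the definition of $\gamma_{z_{0}}(\mathbf{M})$. In fact your bookkeeping of the constant ($\gamma_{z_{0}}(\mathbf{M})\geq 1/C^{2}$ and $C=\gamma_{z_{0}}(\mathbf{M})^{-1/2}$) is cleaner than the paper's, which misstates the intermediate bound as $1/\sqrt{C}$.
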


\begin{proof} Assume first that $z_0$ is a bounded point evaluation of $P^{2}(\mathbf{M})$ with constant $C$, then for every $(v_0,v_1,\dots,v_n,0,0,\dots)$ and $p(z)=v_0+v_1z+\dots v_nz^n$
$$
\vert v k_{z_{0}} \vert = \vert v_0+v_1z_0+\dots +v_nz_0^n \vert \leq C \Vert p(z) \Vert_{P^{2}(\mathbf{M})}=C\Vert v \Vert^2_{\mathbf{M}}.
$$
\noindent In particular, if  $v k_{z_0}=1$ it holds $\Vert v \Vert_{\mathbf{M}} \geq \dfrac{1}{\sqrt{C}}$ and consequently $\gamma_{z_0}(\mathbf{M})\geq \dfrac{1}{\sqrt{C}}>0$.
\noindent On the other hand, if $\gamma_{z_0}(\mathbf{M})>0$ and $p(z)=\sum_{k=0}^{n} v_k z^k$ with $(v_{0},\dots, v_n,0,\dots)\in c_{00}$, either $p(z_0)=0$ and obviously $p(z_0)\leq \Vert p(z)\Vert_{P^{2}(\mathbf{M})}^2$, or $p(z_0)\neq 0$ and the vector $w=(w_{i})_{i=0}\in c_{00}$ defined by $w_i=\dfrac{v_i}{p(z_0)}$ for each $i\geq 0$ verifies $wk_{z_0}=1$ and consequently
$$
\gamma_{z_0}(\mathbf{M}) \leq w\mathbf{M}w^{*} =\dfrac{1}{\vert p(z_0)\vert^2} v\mathbf{M}v^{*}= \dfrac{1}{\vert p(z_0)\vert^2}\Vert p(z) \Vert^2_{\mathbf{M}}.
$$
\noindent Therefore,
$$
\vert p(z_0) \vert^2 \leq \dfrac{1}{\gamma_{z_0}(\mathbf{M})}\Vert p(z) \Vert^{2}_{P^{2}(\mathbf{M})}.
$$
\noindent and $z_0$ is a bounded point evaluation for $P^{2}(\mathbf{M})$ with constant $\dfrac{1}{(\gamma_{z_{0}}(\mathbf{M}))^{1/2}}$.
\end{proof}
\begin{remark} Note that  proof of the above Lemma gives us information about the constant of the bounded point evaluation; indeed,  $z_0$ is  bounded point evaluation for $P^{2}(\mathbf{M})$ with constant $\gamma_{z_0}(\mathbf{M})$.
\end{remark}
\noindent We may generalize the notion of kernels in the context of infinite HPD matrices. More precisely, for an infinite HPD matrix $\mathbf{M}$ we may define the associated kernels:
$$
K_{\mathbf{M}}(z,w)=\sum_{n=0}^{\infty} \varphi_n(z)\overline{\varphi_n(w)}
$$
\noindent for every $z,w$ such that the series converges. In this context, the extremal property for polynomials can be reformulated as:
\begin{lem} Let  $\mathbf{M}$ be an HPD matrix and let $\{\varphi_n(z)\}_{n=0}^{\infty}$ the sequence of orthonormal polynomials associated with $\mathbf{M}$. Then, the following are equivalent:
\begin{enumerate}

\item $\gamma_{z_0}(\mathbf{M})>0$.
\item $K_{\mathbf{M}}(z_0,z_0)=\sum_{n=0}^{\infty} \vert \varphi_n(z_0)\vert^2 <\infty$

\end{enumerate}
\end{lem}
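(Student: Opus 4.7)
The plan is to compute $\gamma_{z_0}(\mathbf{M})$ explicitly as $1/K_{\mathbf{M}}(z_0,z_0)$, adopting the convention that $1/\infty = 0$. This immediately yields the equivalence: $\gamma_{z_0}(\mathbf{M}) > 0$ if and only if the diagonal value $K_{\mathbf{M}}(z_0,z_0)$ is finite. The computation is the natural generalization of Proposition \ref{proposicion4} (which treats the case $z_0 = 0$, since $v_{0,i} = \varphi_i(0)$), and it reduces to a standard Cauchy--Schwarz extremal problem in a finite-dimensional subspace.

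First I would move to the polynomial picture: identify $v \in c_{00}$ with $p_v(z) = \sum v_k z^k$, so that the constraint $\sum_{k=0}^\infty v_k z_0^k = 1$ becomes $p_v(z_0) = 1$ and $v\mathbf{M}v^{*} = \Vert p_v \Vert^2_{\mathbf{M}}$. Then I expand $p_v$ in the orthonormal basis $\{\varphi_n\}$ as $p_v = \sum_{k=0}^N a_k \varphi_k$ for some $N$; the $\mathbf{M}$-norm becomes $\sum_{k=0}^N \vert a_k \vert^2$ and the constraint becomes the linear relation $\sum_{k=0}^N a_k \varphi_k(z_0) = 1$.

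Next I would solve the level-$N$ minimization problem: minimize $\sum_{k=0}^N \vert a_k \vert^2$ subject to $\sum_{k=0}^N a_k \varphi_k(z_0) = 1$. Cauchy--Schwarz gives
\[
1 = \Bigl\vert \sum_{k=0}^N a_k \varphi_k(z_0) \Bigr\vert^2 \leq \Bigl(\sum_{k=0}^N \vert a_k \vert^2\Bigr)\Bigl(\sum_{k=0}^N \vert \varphi_k(z_0) \vert^2\Bigr),
\]
so the minimum equals $1/\sum_{k=0}^N \vert \varphi_k(z_0) \vert^2$, attained at $a_k = \overline{\varphi_k(z_0)}/\sum_{j=0}^N \vert \varphi_j(z_0) \vert^2$. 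Feasibility from $N=0$ onward is fine because $\varphi_0$ is a nonzero constant, so $\varphi_0(z_0) \neq 0$. Since the denominators $S_N = \sum_{k=0}^N \vert \varphi_k(z_0)\vert^2$ are non-decreasing in $N$, the level-$N$ minima $1/S_N$ are non-increasing, and taking the infimum over all $N$ yields $\gamma_{z_0}(\mathbf{M}) = 1/\sum_{k=0}^\infty \vert \varphi_k(z_0) \vert^2 = 1/K_{\mathbf{M}}(z_0,z_0)$.

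The main obstacle, such as it is, is a bookkeeping one: one must justify the reduction from the $c_{00}$ infimum to the supremum of the finite-truncation infima in the orthonormal basis, and handle the degenerate case where the kernel series diverges. In that case the explicit competitors $a_k = \overline{\varphi_k(z_0)}/S_N$ give $\mathbf{M}$-norm $1/S_N \to 0$ while satisfying the constraint, matching the claimed value $\gamma_{z_0}(\mathbf{M}) = 0$. The real content is the Cauchy--Schwarz extremal identity, which is the familiar reproducing-kernel extremal property; everything else is routine.
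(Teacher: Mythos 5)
Your proposal is correct and follows essentially the same route as the paper: expand the competitor polynomial in the orthonormal basis, apply Cauchy--Schwarz to the constraint $p(z_0)=1$ to get the lower bound $1/\sum_{k=0}^{N}\vert\varphi_k(z_0)\vert^2$ at each degree, exhibit the (normalized) kernel $\sum_{k\le N}\overline{\varphi_k(z_0)}\varphi_k(z)/S_N$ as the extremal polynomial, and let $N\to\infty$. Your write-up is in fact slightly more careful than the paper's on the normalization of the extremizer and on the divergent-kernel case, but the underlying argument is identical.
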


\begin{proof} Using the notation for polynomials we may rewrite:
$$
\gamma_{z_0}(\mathbf{M})=\inf \{ \Vert p(z) \Vert^2_{P^{2}(\mathbf{M})} : p(z)\in \mathbb{P}[z], p(z_0)=1\}.
$$
\noindent First consider a polynomial $q(z)=\sum_{k=0}^{n}v_kz^k$ and we express  it in terms of the orthonormal basis, that is,
$q(z)=\sum_{k=0}^{n} w_k\varphi_k(z)$, then by using the Cauchy-Schwartz inequality
$$
1=\vert q(z_0) \vert \leq   \left(\sum_{k=0}^{n} \vert w_k \vert^2\right)^{1/2}\left(\sum_{k=0}^{n} \vert \varphi_k(z_0) \vert^2\right)^{1/2}.
$$
\noindent Then,
$$
\dfrac{1}{\sum_{k=0}^{n} \vert \varphi_k(z_0) \vert^2} \leq
  \sum_{k=0}^{n} \vert w_k \vert^2 = \Vert q(z) \Vert^2_{P^{2}(\mathbf{M})}.
$$
\noindent And by taking the infimum all over the polynomials of degree $n$,
$$
\dfrac{1}{\sum_{k=0}^{n} \vert \varphi_k(z_0) \vert^2} \leq \inf \{ \Vert q(z) \Vert^2, q(z)\in \mathbb{P}_n[z], q(z_0)=1\}.
$$
\noindent On the other hand, if we consider the polynomial $q(z)=K_{\mathbf{M}}(z,z_0)=\sum_{k=0}^{n} \varphi_k(z)\overline{\varphi_k(z_0)}$  the above infimum is  reached at this polynomial since
$$
\Vert q(z) \Vert^{2}_{P^{2}(\mathbf{M})}= \dfrac{1}{\sum_{k=0}^{n} \vert \varphi_k(z_0) \vert^2}.
$$
\noindent Then, for every $n\in \NN$,
\begin{eqnarray*}
\gamma_{z_0}(\mathbf{M})& = & \inf_{n} \min \{ \Vert q(z) \Vert^2, q(z)\in \mathbb{P}_n[z], q(z_0)=1\}\\
\mbox{} & = & \inf_{n} \dfrac{1}{\sum_{k=0}^{n} \vert \varphi_k(z_0) \vert^2}=\dfrac{1}{\sum_{k=0}^{\infty} \vert \varphi_k(z_0) \vert^2}.
\end{eqnarray*}
\end{proof}

\noindent We summarize all the equivalent notions of bounded point evaluations for an HPD matrix in the following proposition:

\begin{corollary} Let  $\mathbf{M}$ be an HPD matrix and let $\{\varphi_n(z)\}_{n=0}^{\infty}$ the sequence of orthonormal polynomials associated with $\mathbf{M}$. Then, the following are equivalent:

\begin{enumerate}
\item $z_{0}$ is a bounded point evaluation of $P^{2}(\mathbf{M})$.
\item $K_{\mathbf{M}}(z_0,z_0)=\sum_{n=0}^{\infty} \vert \varphi_n(z_0)\vert^2 <\infty$
\item $\gamma_{z_0}(\mathbf{M})>0$.
\item $\displaystyle{
\dfrac{1}{\gamma_{z_0}(\mathbf{M})}=\lim_{n \to \infty} (1,z_0,z_0^2,\dots, z_{0}^n)\mathbf{M}_{
n}^{-1}\left(
\begin{array}{c}
1 \\
z_0\\
\vdots \\
z_{0}^{n} \\
\end{array}
\right) >0}$.
\end{enumerate}
\end{corollary}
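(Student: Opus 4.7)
The equivalences (1) $\Leftrightarrow$ (3) and (2) $\Leftrightarrow$ (3) have just been established in Lemma \ref{lema5} and the preceding lemma, respectively; the only new content is (3) $\Leftrightarrow$ (4). The plan is to realize $\gamma_{z_0}(\mathbf{M})$ as the limit of a sequence of finite-dimensional constrained minima and then evaluate each one in closed form.

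For each $n$, I would introduce the truncated index
$$\gamma^{(n)}_{z_0}(\mathbf{M}) = \inf\{v\mathbf{M}_n v^* : v \in \CC^{n+1},\; v k^{(n)}_{z_0} = 1\},$$
where $k^{(n)}_{z_0} = (1, z_0, \ldots, z_0^n)^T$. Because $c_{00}$ consists of finitely supported sequences, every $v$ admissible for $\gamma_{z_0}(\mathbf{M})$ is, after removing trailing zeros, admissible for some $\gamma^{(n)}_{z_0}(\mathbf{M})$, and conversely any admissible $v \in \CC^{n+1}$ extends by zeros to an admissible element of $c_{00}$. The first step is to deduce from this correspondence that the sequence $\{\gamma^{(n)}_{z_0}(\mathbf{M})\}$ is non-increasing in $n$ and that $\gamma_{z_0}(\mathbf{M}) = \lim_n \gamma^{(n)}_{z_0}(\mathbf{M})$.

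The second step is to solve each finite problem explicitly. Since $\mathbf{M}_n$ is HPD and the constraint is a single linear equation, Lagrange multipliers (treating $v$ and $v^*$ as formally independent, or equivalently by orthogonal projection in the inner product that $\mathbf{M}_n$ induces on $\CC^{n+1}$) identify the unique minimizer as a scalar multiple of the column vector $\mathbf{M}_n^{-1} k^{(n)}_{z_0}$, with the scalar fixed by the normalization $v k^{(n)}_{z_0} = 1$. Substituting back, the minimum value turns out to be the reciprocal of the quadratic form in $\mathbf{M}_n^{-1}$ evaluated at $k^{(n)}_{z_0}$, which is exactly the expression on the right-hand side of (4). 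Passing to the limit and invoking step one yields
$$\frac{1}{\gamma_{z_0}(\mathbf{M})} = \lim_{n \to \infty} (1,z_0,\ldots,z_0^n)\mathbf{M}_n^{-1} k^{(n)}_{z_0},$$
and the equivalence with (3) then follows because, by monotonicity, this limit is positive and finite if and only if $\gamma_{z_0}(\mathbf{M}) > 0$.

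The step requiring most care is the finite-dimensional constrained optimization, in particular executing the complex Lagrange computation cleanly (or equivalently checking via orthogonal decomposition that any feasible $v_0 + w$, with $w$ annihilating the linear functional $v \mapsto vk^{(n)}_{z_0}$, cannot beat the proposed minimizer); by contrast, the exchange of infimum and limit in step one is immediate from the finite-support definition of $c_{00}$. Together these two steps close the chain (1) $\Leftrightarrow$ (2) $\Leftrightarrow$ (3) $\Leftrightarrow$ (4).
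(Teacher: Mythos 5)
Your proof is correct, but it follows a genuinely different route from the paper's. The paper disposes of $(1)\Leftrightarrow(3)$ and $(2)\Leftrightarrow(3)$ exactly as you do (they are the two preceding lemmas) and then proves $(2)\Leftrightarrow(4)$ by quoting the classical determinant expression of the $n$-kernel together with the Schur complement identity, so that the quadratic form in $(4)$ is recognized outright as $K_n(z_0,z_0)=\sum_{k=0}^n\vert\varphi_k(z_0)\vert^2$ and the limit statement follows from the preceding lemma's identity $\gamma_{z_0}(\mathbf{M})=1/\sum_{k=0}^\infty\vert\varphi_k(z_0)\vert^2$. You instead prove $(3)\Leftrightarrow(4)$ directly: truncating the variational problem, observing that the admissible sets are nested so $\gamma_{z_0}(\mathbf{M})=\lim_n\gamma^{(n)}_{z_0}(\mathbf{M})$, and solving each finite constrained minimization, whose value is $1/\bigl(k^{(n)*}_{z_0}\mathbf{M}_n^{-1}k^{(n)}_{z_0}\bigr)$. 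This is in effect an independent, purely variational derivation of the Christoffel-function identity that the paper imports via determinants; it buys self-containedness (no kernel determinant formula, no Schur complement, indeed no orthonormal polynomials at all in this step), at the cost of carrying out the complex constrained optimization, where one should be slightly careful that the minimizer satisfies $v^*\propto\mathbf{M}_n^{-1}k^{(n)}_{z_0}$ (equivalently $v\propto k^{(n)*}_{z_0}\mathbf{M}_n^{-1}$), i.e.\ the Riesz representer of the linear functional $v\mapsto vk^{(n)}_{z_0}$ in the $\mathbf{M}_n$-inner product; your orthogonal-decomposition fallback handles this rigorously. (The missing conjugation in the displayed form of $(4)$ is immaterial, since the quadratic form is real by Hermitian symmetry of $\mathbf{M}_n^{-1}$.)
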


\begin{proof} We only need to  prove $(2)\Longleftrightarrow (4)$. It is well-known the expression
of the $n$-kernel by determinants
\[ K_{n}(y,z) =-\frac{1}{\Delta_{n}} \left |
\begin{array}{ccccc}
c_{00} & c_{10} & \ldots & c_{n,0} & 1 \\
c_{01} & c_{11} & \ldots & c_{n,1} & \overline{z} \\
\vdots & \vdots & \ddots & \vdots & \vdots \\
c_{0,n} & c_{1,n} & \ldots & c_{n,n} & \overline{z}^{n} \\
1 & y & \ldots & y^{n} & 0
\end{array}
\right | = (1,y, \ldots,y^{n} )
M_{n}^{-1} \left (
\begin{array}{c}
1 \\
\overline{z}\\
\vdots \\
\overline{z}^{n}
\end{array}
\right ), \]
the last identity is the Schur complement, that says
$\displaystyle{\left |\begin{array}{cc}
A & b \\
c^{t} & 0 \\
\end{array} \right |=|A| \langle c,A^{-1}b \rangle}$.

\end{proof}

\bigskip

\noindent As a consequence of this result we obtain our main result which is the following proof of Thomson's theorem for measures supported in Jordan curves via an algebraical way. This let us to provide an algebraical characterization of density of polynomials in terms of an index of the moment matrix associated with the measure:

\begin{theorem} Let $\Gamma$ be a Jordan curve such that $z_0 \in \interior \Gamma$ and let $\mu$ be a measure with infinite support in $\Gamma$  with associated moment matrix $\mathbf{M}$. Then, the following statements are equivalent
\begin{enumerate}
\item $\gamma_{z_0}(\mathbf{M})>0$.
\item $P^{2}(\mu) \neq L^{2}(\mu)$.
\item $z_0$ is a bounded point evaluation of $P^{2}(\mathbf{M})$.
\end{enumerate}
\end{theorem}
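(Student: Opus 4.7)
The plan is to reduce the general statement to the case $z_0=0$ already proved in Theorem \ref{teorema1}, by translating the origin to $z_0$. Observe first that the equivalence $(1)\Leftrightarrow(3)$ is exactly the content of Lemma \ref{lema5} applied to the moment matrix $\mathbf{M}$, and requires no hypothesis on $\Gamma$; hence the only real work is the equivalence $(1)\Leftrightarrow(2)$.

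To this end, I would let $T\colon\CC\to\CC$ be $T(z)=z-z_0$ and set $\tilde\mu=T_{*}\mu$, the pushforward of $\mu$ by $T$. Then $\tilde\mu$ is a nontrivial compactly supported measure whose support is the Jordan curve $\tilde\Gamma=\Gamma-z_0$, and, since $T$ is a homeomorphism of $\CC$, one has $0=T(z_0)\in\interior\tilde\Gamma$. Let $\tilde{\mathbf{M}}=\mathbf{M}(\tilde\mu)$ be the associated moment matrix. The substitution $p(z)\mapsto q(w):=p(w+z_0)$ is a linear bijection from $\mathbb{P}[z]$ onto $\mathbb{P}[w]$ preserving degree, and the change-of-variable formula for pushforwards gives
\[
\int |p(z)|^{2}\,d\mu(z)=\int |q(w)|^{2}\,d\tilde\mu(w).
\]

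Two identifications then follow at once. First, since $p(z_0)=q(0)$, the infima defining $\gamma_{z_0}(\mathbf{M})$ and $\gamma(\tilde{\mathbf{M}})$ range over corresponding families with the same norm, so $\gamma_{z_0}(\mathbf{M})=\gamma(\tilde{\mathbf{M}})$. Second, the map $\phi\colon L^{2}(\mu)\to L^{2}(\tilde\mu)$ sending $f(z)$ to $f(\,\cdot\,+z_0)$ is a surjective isometry carrying $\mathbb{P}[z]$ onto $\mathbb{P}[w]$, hence it carries $P^{2}(\mu)$ onto $P^{2}(\tilde\mu)$; consequently, $P^{2}(\mu)=L^{2}(\mu)$ if and only if $P^{2}(\tilde\mu)=L^{2}(\tilde\mu)$.

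Applying Theorem \ref{teorema1} to $\tilde\mu$ on $\tilde\Gamma$ (whose interior contains $0$) then yields $\gamma(\tilde{\mathbf{M}})=0$ if and only if $P^{2}(\tilde\mu)=L^{2}(\tilde\mu)$. Combining this with the two identifications above and taking contrapositives gives $\gamma_{z_0}(\mathbf{M})>0$ if and only if $P^{2}(\mu)\neq L^{2}(\mu)$, which is precisely $(1)\Leftrightarrow(2)$. I do not expect any serious obstacle here: the argument is essentially the elementary fact that a translation of the complex plane induces a unitary equivalence between $L^{2}(\mu)$ and $L^{2}(\tilde\mu)$ under which polynomials correspond to polynomials of the same degree, so the problem genuinely reduces to the case $z_0=0$ already handled. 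The only points warranting brief care are the preservation of degree under translation, so that the closures defining $P^{2}$ match through $\phi$, and the verification that $0\in\interior\tilde\Gamma$, which is immediate from the homeomorphism property of $T$.
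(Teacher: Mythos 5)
Your proposal is correct, and at the strategic level it is the same reduction the paper uses: translate by $-z_0$ so that the image curve has $0$ in its interior, identify $\gamma_{z_0}(\mathbf{M})$ with $\gamma(\widetilde{\mathbf{M}})$ for the translated measure, and invoke Theorem \ref{teorema1}. The difference lies in how the key identity $\gamma_{z_0}(\mathbf{M})=\gamma(\widetilde{\mathbf{M}})$ is established. The paper, in keeping with its matrix-algebra philosophy, works with the finite sections: it uses the relation $\widetilde{\mathbf{M}}_n=\mathbf{A}_n(1,-z_0)\,\mathbf{M}_n\,\mathbf{A}_n^{*}(1,-z_0)$, inverts, computes $e_0^{t}\widetilde{\mathbf{M}}_n^{-1}e_0=(1,z_0,\dots,z_0^n)\mathbf{M}_n^{-1}(1,\overline{z}_0,\dots,\overline{z}_0^n)^{t}$, and identifies the limit with $K_{\mathbf{M}}(z_0,z_0)=1/\gamma_{z_0}(\mathbf{M})$ via the kernel/Schur-complement formula. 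You instead obtain the same identity directly from the change of variables $p(z)\mapsto q(w)=p(w+z_0)$, which matches the extremal problems defining the two indices term by term; this is shorter and more transparent, at the cost of bypassing the explicit matrix identities that the paper wants to showcase (and which also yield the formula $1/\gamma_{z_0}(\mathbf{M})=\lim_n(1,z_0,\dots,z_0^n)\mathbf{M}_n^{-1}(1,\overline{z}_0,\dots,\overline{z}_0^n)^{t}$ as a by-product). You also make explicit two points the paper leaves implicit: that the translation induces a unitary of $L^{2}(\mu)$ onto $L^{2}(\tilde\mu)$ carrying $P^{2}(\mu)$ onto $P^{2}(\tilde\mu)$, so that statement (2) transfers, and that $(1)\Leftrightarrow(3)$ is exactly Lemma \ref{lema5} with no hypothesis on $\Gamma$. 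Both arguments are sound; yours is the more elementary verification, the paper's the more informative one from the matrix point of view.
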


\begin{proof}

Let  $ \widetilde{\mathbf{M}}$ be the  moment matrix associated with image  measure $\tilde{\mu}$ obtained  after   a similarity map, $\varphi(z)=\alpha z+\beta$ onto $\CC$, is applied to the measure ${\mu}$ and $\tilde{\Gamma}$ the image Jordan curve. 
%

\noindent We first prove that
\[ \gamma(\widetilde{\mathbf{M}})=\gamma_{z_{0}}(\mathbf{M}).\]
 It is know ( see e.g. \cite{jcam2007}) that the expression that relates the matrices $\mathbf{M}_{n}$ and $ \widetilde{\mathbf{M}}_{n}$, it is given by
\[ \widetilde{M}_{n}= \mathbf{A}_{n}(\alpha,\beta) \mathbf{M}_{n} \mathbf{A}^{*}_{n}(\alpha,\beta),\]
where  $\mathbf{A}_{n}(\alpha,\beta)$ is defined as in \cite{jcam2007},

\[ \mathbf{A}_{n}(\alpha,\beta) =
\left (
\begin{array}{ccccc}
{\binom 0 0} \alpha^{0} \beta^{0} & {\binom 1 0} \alpha^{0}
\beta^{1} & {\binom 2 0} \alpha^{0} \beta^{2} & {\binom 3 0}
\alpha^{0} \beta^{3} &
\ldots \\
0 & {\binom 1 1} \alpha^{1} \beta^{0} & {\binom 2 1} \alpha^{1}
\beta^{1} & {\binom 3 1} \alpha^{1} \beta^{2} &
\ldots \\
0 & 0 & {\binom 2 2} \alpha^{2} \beta^{0} & {\binom 3 2}
\alpha^{2} \beta^{1} &
\ldots \\
0 & 0 & 0 & {\binom 3 3} \alpha^{3} \beta^{0} &
\ldots \\
\vdots & \vdots & \vdots & \vdots & \ddots
\end{array}
\right ).\]
Note that if we choose $\alpha=1$ and a translation $\beta=-z_{0}$, then  $0 \in {\it int}(\tilde{\Gamma})$, we obtain:
\[ \widetilde{\mathbf{M}_{n}}=\mathbf{A}_{n}(1,-z_{0}) \; \mathbf{M}_{n} \; \mathbf{A}^{*}_{n}(1,-z_{0}). \] Since $\mathbf{M}$
is HPD, all its sections are invertible and we can write
\[ \widetilde{\mathbf{M}}_{n}^{-1}= [\mathbf{A}_{n}^{*}]^{-1}(1,-z_{0}) \; \mathbf{M}_{n}^{-1}\; \mathbf{A}_{n}^{-1}(1,-z_{0}) .\]
It is clear that
\begin{multline*}
\mathbf{A}_{n}(1,-z_{0})=
\left (
\begin{array}{cccccc}
1 &-z_{0} & z^2_{0} & -z_{0}^{3} & \ldots & \pm \binom{n}{n} z_{0}^{n} \\
0 & 1& -2 z_{0} &3 z_{0}^{2} & \ldots & \mp \binom{n}{n-1} z_{0}^{n-1} \\
0 & 0 & 1 & -3z_{0} & \ldots & \pm  \binom{n}{n-2} z_{0}^{-1} \\
\vdots & \vdots & \vdots & \vdots & \mbox{} & \vdots \\
0 & 0 & 0 & 0 & \ldots & \binom{n}{0} z^{0}
\end{array}
\right ) \quad \Rightarrow \quad \\
 \mathbf{A}_{n}^{-1}(1,-z_{0}) =\left (
\begin{array}{cccccc}
1 &z_{0} & z^2_{0} & z_{0}^{3} & \ldots &  \binom{n}{n} z_{0}^{n} \\
0 & 1& 2 z_{0} &3 z_{0}^{2} & \ldots & \binom{n}{n-1} z_{0}^{n-1} \\
0 & 0 & 1 & 3z_{0} & \ldots &   \binom{n}{n-2} z_{0}^{-1} \\
\vdots & \vdots & \vdots & \vdots & \mbox{} & \vdots \\
0 & 0 & 0 & 0 & \ldots & \binom{n}{0} z^{0}
\end{array}
\right ).
\end{multline*}
It is immediate that
\[\gamma(\widetilde{\mathbf{M}})=\lim_{n} \frac{|}{e_{0}^{t} \widetilde{M}^{-1}_{n} e_{0}}\]
and
\[ e_{0}^{t} \widetilde{\mathbf{M}}^{-1}_{n} e_{0} =e_{0}^{t} \left ([\mathbf{A}_{n}^{-1}]^{*}(1,-z_{0}) \; \mathbf{M}_{n}^{-1}\; \mathbf{A}_{n}^{-1}(1,-z_{0})\right  ) e_{0}.\]
We have also that
\[ e_{0}^{t} \mathbf{A}_{n}^{-1}(1,-z_{0})=(1,z_{0},z_{0}^2, \ldots, z_{0}^{n}),\qquad
[\mathbf{A}_n^{*}]^{-1}(1,-z_{0})e_{0}=(1,\overline{z}_{0},\overline{z}_{0}^2, \ldots, \overline{z}_{0}^{n})^{t}.\]
In consequence
\[ \gamma(\widetilde{\mathbf{M}})=\gamma_{z_{0}}(\mathbf{M}).\]

\noindent Now the result is consequence of Theorem $1$.
\end{proof}

\bigskip
\bigskip

\noindent We finish with some applications to our results:
\begin{corollary} Let $\Gamma$ be an analytic Jordan curve with non empty interior and let   $\mu$ be a measure with support in $\Gamma$ with weight function $w(z)$ defined on $\Gamma$ positive and continous and. Then $L^{2}(\mu)\neq P^{2}(\mu)$.

\end{corollary}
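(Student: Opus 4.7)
The plan is to exhibit a single bounded point evaluation for $P^{2}(\mu)$ inside $\Gamma$ and then invoke the previous theorem to conclude that $P^{2}(\mu)\neq L^{2}(\mu)$. Pick any point $z_{0}\in\interior\Gamma$. Since $\Gamma$ is analytic, it is in particular a rectifiable Jordan curve, so I can interpret $\mu$ as $d\mu=w(z)\,ds$ where $s$ denotes arc length on $\Gamma$, and for every polynomial $p(z)$ Cauchy's integral formula applies:
\[
p(z_{0})=\frac{1}{2\pi i}\oint_{\Gamma}\frac{p(\zeta)}{\zeta-z_{0}}\,d\zeta.
\]

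Next, I estimate the right-hand side. Taking absolute values and applying the Cauchy-Schwarz inequality against the measure $d\mu = w\,ds$ yields
\[
|p(z_{0})|\;\le\;\frac{1}{2\pi}\left(\int_{\Gamma}\frac{ds}{|\zeta-z_{0}|^{2}\,w(\zeta)}\right)^{\!1/2}\left(\int_{\Gamma}|p(\zeta)|^{2}\,d\mu\right)^{\!1/2}.
\]
Because $\Gamma$ is a compact set and $w$ is continuous and strictly positive there, there exists $\delta>0$ with $w(\zeta)\ge\delta$ for every $\zeta\in\Gamma$. Because $z_{0}$ lies in the interior of $\Gamma$, the distance $d(z_{0},\Gamma)$ is positive and thus $|\zeta-z_{0}|$ is bounded away from zero, so the first factor is finite. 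Setting
\[
C\;:=\;\frac{1}{2\pi}\left(\frac{1}{\delta}\int_{\Gamma}\frac{ds}{|\zeta-z_{0}|^{2}}\right)^{\!1/2}<\infty,
\]
I obtain $|p(z_{0})|\le C\,\|p\|_{L^{2}(\mu)}$ for every polynomial $p$; that is, $z_{0}$ is a bounded point evaluation for $P^{2}(\mu)$.

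With this bounded point evaluation in hand, the conclusion is immediate from the main theorem of the previous section: since $z_{0}\in\interior\Gamma$ is a bounded point evaluation for $P^{2}(\mathbf{M})$, the equivalent condition $P^{2}(\mu)\neq L^{2}(\mu)$ holds. There is essentially no technical obstacle here beyond verifying that the Cauchy kernel is square-integrable against $d\mu$; the key structural input is the continuity and positivity of $w$ on a compact curve, which provides the uniform lower bound $w\ge\delta$, and the analyticity of $\Gamma$ which guarantees rectifiability so that the arc-length formulation (and Cauchy's formula) is valid. The step doing all the real work is the Cauchy-Schwarz estimate combined with the earlier characterisation, not the existence of the point $z_{0}$.
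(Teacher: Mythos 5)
Your proof is correct, but it reaches the key intermediate fact by a different route than the paper. Both arguments funnel through the same final step: once some $z_{0}\in\interior\Gamma$ is known to be a bounded point evaluation (equivalently $\gamma_{z_{0}}(\mathbf{M})>0$), the main theorem of the last section immediately yields $P^{2}(\mu)\neq L^{2}(\mu)$. The paper gets $\gamma_{z_{0}}(\mathbf{M})>0$ by citing Szeg\H{o}'s theory of polynomials orthogonal on an analytic curve with positive continuous weight, which gives finiteness of the kernel $K_{\mathbf{M}}(z_{0},z_{0})=\sum_{n}\vert\varphi_{n}(z_{0})\vert^{2}$ at interior points, and then uses the lemma identifying $\gamma_{z_{0}}(\mathbf{M})=1/K_{\mathbf{M}}(z_{0},z_{0})$. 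You instead produce the bounded point evaluation directly and with an explicit constant: Cauchy's integral formula for the polynomial at $z_{0}$, followed by Cauchy--Schwarz against $d\mu=w\,ds$, using $w\ge\delta>0$ (compactness plus continuity and positivity) and $\operatorname{dist}(z_{0},\Gamma)>0$ to bound $\int_{\Gamma}\vert\zeta-z_{0}\vert^{-2}w(\zeta)^{-1}\,ds<\infty$. Your argument is more elementary and self-contained, and in fact uses analyticity of $\Gamma$ only through rectifiability, so it proves the corollary for any rectifiable Jordan curve carrying a positive continuous weight times arc length; the paper's appeal to Szeg\H{o} is shorter on the page and carries extra information (kernel asymptotics) but is less general and less explicit about the constant. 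One small point worth stating explicitly in your write-up: Cauchy's formula applies because a polynomial is entire and a positively oriented rectifiable Jordan curve has winding number one about each interior point.
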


\begin{proof}
 Let  $z_0$ be an arbitrary interior point of $\Gamma$, by using  results in \cite{szego} it follows that
$K_{\mathbf{M}}(z_0,z_0)<\infty$. Therefore $\gamma_{z_{0}}(\mathbf{M})>0$, and by using Theorem $3$ we may conclude
 $L^{2}(\mu)\neq P^{2}(\mu)$.
\end{proof}

\noindent And also from Theorem $3$ the following result is obvious:

\begin{corollary}   Let $\Gamma$ be a Jordan curve and  $\mu$ be a measure with support in $\Gamma$. Assume that
$L^{2}(\mu)\neq P^{2}(\mu)$, then  every  $z_0 \in  \interior \Gamma $ is a bounded point evaluation of $\mu$.
\end{corollary}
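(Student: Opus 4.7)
The plan is to observe that this corollary is essentially a pointwise restatement of Theorem 3, and the proof reduces to quoting the equivalence $(2)\Longleftrightarrow (3)$ in that theorem at an arbitrary interior point.

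First I would fix an arbitrary $z_{0}\in \interior \Gamma$. The hypothesis of Theorem 3 is satisfied for this choice of $z_{0}$, the Jordan curve $\Gamma$, and the measure $\mu$ (with associated moment matrix $\mathbf{M}$), since the assumption that $\mu$ has infinite support on $\Gamma$ is already implicit in the setting where $P^{2}(\mu)$ is being compared with $L^{2}(\mu)$ (otherwise the question is trivial). Therefore the three statements listed in Theorem 3 are equivalent at this particular $z_{0}$.

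Next, the hypothesis $L^{2}(\mu)\neq P^{2}(\mu)$ is exactly statement $(2)$ of Theorem 3. Applying the implication $(2)\Longrightarrow (3)$ gives that $z_{0}$ is a bounded point evaluation of $P^{2}(\mathbf{M})$. By the remark following the definition of bounded point evaluation for an HPD matrix, when $\mathbf{M}$ is the moment matrix of $\mu$ the notion of bounded point evaluation for $P^{2}(\mathbf{M})$ coincides with that of bounded point evaluation for $P^{2}(\mu)$, i.e.\ a bounded point evaluation for $\mu$ in the usual sense. Since $z_{0}\in \interior \Gamma$ was chosen arbitrarily, every interior point of $\Gamma$ is a bounded point evaluation of $\mu$, which is the desired conclusion.

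There is no real obstacle here, as the content of the result is already packaged in Theorem 3; the only thing to check carefully is that the equivalence in Theorem 3 is genuinely uniform over all interior points $z_{0}$ (which it is, because the statement of Theorem 3 is formulated for an arbitrary interior point and nothing in its proof privileges one point over another).
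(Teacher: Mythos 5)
Your proof is correct and is essentially the paper's own argument: the paper simply remarks that the corollary is immediate from Theorem 3, which is exactly your application of the implication $(2)\Longrightarrow(3)$ at an arbitrary $z_{0}\in \interior \Gamma$, together with the identification of bounded point evaluations for $P^{2}(\mathbf{M})$ with those for $P^{2}(\mu)$.
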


\end{document}